\newtheorem{theorem}{Theorem}
\newtheorem{lemma}{Lemma}
\newtheorem{proposition}{Proposition}
\newtheorem{corollary}{Corollary}
\newtheorem{remark}{Remark}
\newcommand{\bfloor}[1]{\bigg\lfloor #1 \bigg\rfloor}
\newcommand{\subalign}[1]{%
  \vcenter{%
    \Let@ \restore@math@cr \default@tag
    \baselineskip\fontdimen10 \scriptfont\tw@
    \advance\baselineskip\fontdimen12 \scriptfont\tw@
    \lineskip\thr@@\fontdimen8 \scriptfont\thr@@
    \lineskiplimit\lineskip
    \ialign{\hfil$\m@th\scriptstyle##$&$\m@th\scriptstyle{}##$\hfil\crcr
      #1\crcr
    }%
  }%
}
\title[Divisibility of the multiplicative order]{Divisibility of the multiplicative order modulo monic irreducible polynomials over finite fields}
\author{Joaquim Cera Da Concei\c c\~ao}
\address{Normandie Universit\'e, UNICAEN, CNRS, LMNO, 14000 Caen, France}
\email{\tt joaquim.cera-daconceicao@unicaen.fr}
\urladdr{\href{https://jceradaconceicao.github.io}{\tt https://jceradaconceicao.github.io}}
\subjclass{11R44, 11T06, 11N37, 11R58}
\keywords{Chebotarev density theorem, Dirichlet density, Kummer extension, finite field, global function field, monic irreducible polynomial, multiplicative order}
\begin{document}

\begin{abstract}
We consider the set of monic irreducible polynomials $P$ over a finite field $\mathbb{F}_q$ such that 
the multiplicative order modulo $P$ of some $a$ in $\mathbb{F}_q(T)$ is divisible by a fixed positive integer $d$. Call
$R_q(a,d)$ this set. We show the existence or non-existence of the density of $R_q(a,d)$ for
three distinct notions of density. In particular, the sets $R_q(a,d)$ have a Dirichlet density.
Under some assumptions, we prove simple formulas for the density values.
\end{abstract}

\maketitle
\tableofcontents

\section{Introduction}\label{SectionIntro}

Let $a\in \mathbb{Q}\setminus \{0,\pm 1\}$ and $d$ be a positive integer. The proportion of rational prime numbers $p$ such that $d$ divides the multiplicative order of $a$ modulo $p$ has been widely studied. It was originally considered by Hasse \cite{Ha1, Ha2} in $1965$ and $1966$, with $a$ a square-free integer and $d$ a prime number, in order to find the natural density of the set of primes dividing the sequence $(1 + a^n + \cdots + a^{(d-1)n})_{n\geq 0}$ at some $n\geq 0$. The problem was completely solved when Wiertelak \cite{Wie} proved the following theorem:
\begin{theorem}
    Let $N_a(d)$ be the set of prime numbers $p$ such that $d$ divides the multiplicative order of $a$ modulo $p$ and $N_a(d;x)=\#N_a(d)\cap [1,x]$. Then
    \[
    N_a(d;x) = \delta_a(d)\mathrm{Li}(x) + \mathcal{O}_{d,a}\bigg( \frac{x(\log{\log{x}})^{\omega(d)+1}}{(\log{x})^3}  \bigg),
    \]
    where the implied constant depends on $d$ and $a$, $\mathrm{Li}$ is the logarithmic integral function, $\omega$ is the number of distinct-prime-divisor function, and $\delta_a(d)\in[0,1]$ is the natural density of $N_a(d)$.
\end{theorem}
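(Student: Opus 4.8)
The plan is to reduce the divisibility condition to a finite, $x$-uniform family of splitting conditions in Kummer--cyclotomic number fields, and then to extract the asymptotic from effective prime counting, the main term coming from the densities of the relevant splitting conditions and the error from a careful truncation.

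\emph{From divisibility to power residues.} Write $d=\prod_{\ell\mid d}\ell^{e_\ell}$. Since $d\mid\mathrm{ord}_p(a)$ if and only if $\ell^{e_\ell}\mid\mathrm{ord}_p(a)$ for every prime $\ell\mid d$, and the $\ell$-parts of $p-1$ for distinct $\ell\mid d$ are independent, it suffices to describe the condition $\ell^e\mid\mathrm{ord}_p(a)$. A short computation with a primitive root modulo $p$ shows that, writing $v=v_\ell(p-1)$, one has $\ell^e\mid\mathrm{ord}_p(a)$ if and only if $v\ge e$ and $a$ is \emph{not} an $\ell^{\,v-e+1}$-th power modulo $p$. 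Telescoping over $v$, the indicator of $N_a(d)$ at a prime $p\nmid a$ equals a finite sum over tuples $(v_\ell)_{\ell\mid d}$ with $v_\ell\ge e_\ell$; the summand attached to such a tuple is the product over $\ell\mid d$ of
\[
\bigl(\mathbf 1[p\equiv 1\pmod{\ell^{v_\ell}}]-\mathbf 1[p\equiv 1\pmod{\ell^{v_\ell+1}}]\bigr)\bigl(1-\mathbf 1[a\in(\mathbb F_p^\times)^{\ell^{\,v_\ell-e_\ell+1}}]\bigr),
\]
and for a fixed $p$ only the finitely many tuples with each $v_\ell\le v_\ell(p-1)$ contribute.

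\emph{From power residues to splitting, and the main term.} For $p\equiv 1\pmod{\ell^{\,j}}$ the condition $a\in(\mathbb F_p^\times)^{\ell^{\,j}}$ means precisely that $p$ splits completely in the radical field $\mathbb Q(\zeta_{\ell^{\,j}},a^{1/\ell^{\,j}})$, while $p\equiv 1\pmod{\ell^{\,k}}$ means $p$ splits completely in $\mathbb Q(\zeta_{\ell^{\,k}})$. Expanding the products above by inclusion--exclusion, the indicator of $N_a(d)$ becomes a finite signed combination of indicators ``$p$ splits completely in $L$'' for fields $L=\mathbb Q(\zeta_n,a^{1/m})$ with $m\mid n$ and $n$ supported on the primes of $d$; each such $L$ is Galois over $\mathbb Q$, so by the prime number theorem for $L$ (equivalently the Chebotarev density theorem) the number of $p\le x$ splitting completely in $L$ is $\mathrm{Li}(x)/[L:\mathbb Q]$ plus an error. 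Summing the signed combination over $p\le x$ gives
\[
N_a(d;x)=\delta_a(d)\,\mathrm{Li}(x)+(\text{error}),\qquad \delta_a(d)=\sum_{(n,m)}\frac{c(n,m)}{[L:\mathbb Q]}\in[0,1],
\]
where $c(n,m)\in\{-1,0,1\}$ is the inclusion--exclusion sign; the series converges absolutely since $[L:\mathbb Q]\gg\phi(n)$ grows rapidly with $n$.

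\emph{Controlling the error.} This is the quantitative heart of the argument. Fix a truncation level $C$ equal to a power of $\log x$. For tuples with every $\ell^{v_\ell}\le C$ the associated fields have conductor bounded by a ($d$-dependent) power of $\log x$, there are only $O_d\bigl((\log\log x)^{\omega(d)}\bigr)$ of them, and one counts the split primes via effective prime counting (the prime number theorem in arithmetic progressions with Siegel--Walfisz for the cyclotomic part, Chebotarev's theorem for the Kummer part); their total error is negligible against the right-hand side. For tuples with some $\ell^{v_\ell}>C$ one keeps only the weaker information $p\equiv 1\pmod{M}$ with $M=\prod_{\ell\mid d}\ell^{v_\ell}\ge C$: since $M$ is supported on the finitely many primes of $d$, the Brun--Titchmarsh inequality together with the convergence of $\sum_M\phi(M)^{-1}$ over such $M$ bounds the contribution of this tail by $O_d\bigl(x/(C\log x)\bigr)$, and the same estimate controls the discarded tail of the series defining $\delta_a(d)$. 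Choosing $C\asymp(\log x)^2$ and balancing the $O_d\bigl((\log\log x)^{\omega(d)}\bigr)$ low-conductor terms against this tail yields the stated error $x(\log\log x)^{\omega(d)+1}/(\log x)^3$; in particular $\delta_a(d)$ is the natural density of $N_a(d)$. What makes $\delta_a(d)$ explicit is the evaluation $[\mathbb Q(\zeta_n,a^{1/m}):\mathbb Q]=\phi(n)\cdot[\mathbb Q(\zeta_n,a^{1/m}):\mathbb Q(\zeta_n)]$, where by Kummer theory the second factor is $m$ divided by a correction that is trivial unless $a$ is, up to sign, a perfect power, or $m$ is even and $\mathbb Q(\zeta_n)$ already contains $\sqrt{\pm a}$ (for instance via $\sqrt2\in\mathbb Q(\zeta_8)$).

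\emph{Main obstacle.} Two points are delicate. The first is the degree/entanglement bookkeeping: handling in all cases the possible coincidences between the Kummer and cyclotomic parts (notably $a$ a perfect power and the $2$-adic exceptions) is what turns the formal series into a closed form and is exactly the source of the simple formulas available only under additional hypotheses. The second is purely analytic: matching the precise shape of the error --- the exponent $3$ and the power $\omega(d)+1$ of $\log\log x$ --- is more than a single black-box application of Chebotarev's theorem; it comes from the interplay between the number of low-conductor fields, the effective error in counting their split primes, the Brun--Titchmarsh bound on the high-conductor tail, and the optimal choice of the cutoff $C$.
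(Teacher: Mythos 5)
This theorem is not proved in the paper at all: it is Wiertelak's result, quoted from the literature as motivation, and the author explicitly distinguishes Wiertelak's original argument from the later approach of Pappalardi and Moree. Your sketch is exactly the Pappalardi--Moree route --- write the indicator of $d\mid\mathrm{ord}_p(a)$ as a signed, telescoping sum over splitting conditions in the fields $\mathbb{Q}(\zeta_n,a^{1/m})$, count split primes effectively for small conductor, and kill the large-conductor tail with Brun--Titchmarsh --- and this is precisely the architecture the paper transplants to $\mathbb{F}_q[T]$: Lemma~\ref{keyIddegN} is the analogue of your telescoping identity, Theorem~\ref{Chebo} replaces effective Chebotarev, and Lemma~\ref{BoundsSums} is the analogue of your tail estimate over moduli supported on the primes of $d$. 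So, as a comparison of methods, you prove the characteristic-zero statement by the very method the paper uses in characteristic $p$, where matters are simpler because the Weil bound gives a square-root error with explicit dependence on the field, so no truncation in the conductor is needed degree by degree. Your reduction of $\ell^{e}\mid\mathrm{ord}_p(a)$ to ``$v_\ell(p-1)\ge e$ and $a$ is not an $\ell^{\,v-e+1}$-th power'' is correct, as is the identification of the power-residue conditions with complete splitting.

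One step needs more care before this counts as a proof. For the low-conductor fields $L=\mathbb{Q}(\zeta_n,a^{1/m})$ with $n$ up to a power of $\log x$ you invoke ``Chebotarev's theorem for the Kummer part'' as a black box. The unconditional effective Chebotarev theorem does not give an error term uniform as $[L:\mathbb{Q}]$ and the discriminant grow like powers of $\log x$: the constant in the $x\exp(-c\sqrt{\log x})$ error degrades with the field because of possible exceptional zeros, and this is exactly where the argument could fail. What saves it is that each $L$ is abelian over $\mathbb{Q}(\zeta_n)$, so the splitting count reduces to Hecke $L$-functions of conductor polynomial in $\log x$ and a Siegel--Walfisz-type theorem applies to the whole field, not merely to the cyclotomic part; this uniform input is the actual engine of Wiertelak's and Pappalardi's proofs and must be stated as a lemma rather than absorbed into ``Chebotarev.'' With it in place, your cutoff $C\asymp(\log x)^2$ and the Brun--Titchmarsh tail do yield an error of the shape $x(\log\log x)^{O(\omega(d))}/(\log x)^3$; extracting the exact exponent $\omega(d)+1$ is bookkeeping you have rightly flagged as delicate rather than conceptual.
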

Moreover, Wiertelak gave a formula for $\delta_a(d)$ that shows that $\delta_a(d)\in \mathbb{Q}_{>0}$. More recently, Pappalardi \cite{Pap} took a different approach to this problem and obtained another equivalent formula for the density. It was given in a more compact form by Moree \cite{Moree}, using a similar method. Note that in some cases, the set $N_a(d)$ is also related to sets of prime divisors of some linear integral sequences. See \cite[Chapter 3]{Ba0} for the definition of such sequences and the computation of some of the densities.

There is an analogy between $\mathbb{Z}$ and the ring of polynomials $A=\mathbb{F}_q[T]$ with positive characteristic $p$. Both are euclidean rings in which prime numbers and monic irreducible polynomials are prime elements. The analogue of $\mathbb{Q}$ is the fraction field of $A$, that is, $K=\mathbb{F}_q(T)$. With the above problem in mind, it is natural to ask whether a similar investigation can be conducted for monic irreducible polynomials in $A$. This is the object of our paper. Let $a\in K^\times$ and $d$ be a positive integer. We define $R_q(a,d)$ to be the set of primes $P\in A$ such that the multiplicative order of $a$ in the group $(A/P)^\times$ is divisible by $d$. To our knowledge, the only instance of study of these sets are recent papers of Ballot \cite{Ba1, Ba2} in which the case $a=T$ and $d$ a prime number is treated with elementary methods. To determine the proportion of such primes, we use prime densities on $A$. Let $S\subset A$ be a set of monic irreducible polynomials and $S(N)$ be the number of $P\in S$ with polynomial degree $N$, $N\geq 1$. The most common densities are the $d_1$ and $\delta$ densities defined, when they exist, by the limits
\[
d_1(S) = \lim_{N\to +\infty} \frac{S(N)}{\mathcal{P}_+(N)} \quad \text{and} \quad \delta(S) = \lim_{s\to 1^+} \frac{\sum_{P\in S} NP^{-s}}{\sum_{P\in \mathcal{P}_+} NP^{-s}}.
\]
The letter $\mathcal{P}_+$ denotes the set of monic irreducible polynomials in $A$ and $NP=q^{\deg(P)}$ is the norm of $P$. The quantity $\mathcal{P}_+(N)$ is usually denoted $I_N$ and is given by the sum
\[
I_N= \frac{1}{N} \sum_{d\mid N} \mu(d) q^{N/d}.
\]
The number $\delta(S)$ is called the {\it Dirichlet density} of $S$ and is the analogue of the Dirichlet density used for rational prime numbers. However, in a discussion about prime densities on $A$, Ballot \cite{Ba3} defines five densities $d_1, d_2, d_3, d_4$ and $\delta$, and  concludes two things. Denoting by $\delta_1\implies \delta_2$ the fact that any set of primes in $A$ having a $\delta_1$-density equal to $d$ must have a $\delta_2$-density equal to $d$, \cite[Theorem A]{Ba3} states the following:
\[
d_1 \iff d_2 \implies d_3 \implies d_4 \iff \delta.
\]
Moreover, $d_3$ is not equivalent to $d_2$, nor $d_4$. In conclusion, there are three distinct densities to be considered. In this paper, we consider $d_1$, $\delta$ and the $d_3$-density defined by the limit
\[
d_3(S)=\lim_{N\to +\infty} \frac{1}{N} \sum_{n=1}^N \frac{S(n)}{I_n} =\lim_{N\to +\infty} \frac{1}{N} \sum_{n=1}^N \frac{S(n)}{q^n/n}, 
\]
when it exists. Note that the second equality comes from the well-known equivalence $I_n\sim q^n/n$ as $n$ tends to infinity. Secondly, although there is some evidence of $d_1$ being an analogue of the natural density commonly used on $\mathbb{N}$, Ballot concludes that $d_3$ seems to be a better candidate. Indeed, various sets of rational prime numbers that are known to have natural density have analogues in $A$ that do not have $d_1$-density but have $d_3$-density. In this work, we prove that the set $R_q(a,d)$ does not usually have $d_1$-density, but always has $d_3$-density, thus confirming $d_3$ as a strong analogue of the natural density. Our work is based on the method used by Pappalardi \cite{Pap} and Moree \cite{Moree}, and on the elementary approach taken by Ballot \cite{Ba1, Ba2}.

Furthermore, this problem is closely related to Artin's primitive root conjecture over function fields. Let $a\in K\setminus \mathbb{F}_q$ not be an $l$-th power for all $l\mid q-1$. The conjecture states that there exist infinitely many $P\in \mathcal{P}_+$ such that $a$ is a primitive root modulo $P$, that is, $a$ has order $NP-1$ in $(A/P)^\times$. The conjecture has been widely studied in the function field setting. It was first proven by Bilharz \cite{Bil} under the generalized Riemann hypothesis for function fields, which was later proved by Weil \cite{Wei}. In his proof, Bilharz shows that the set $S(a)$ of primes $P$ that have $a$ as a primitive root has positive Dirichlet density. Another proof was given in 1994 by Pappalardi and Shparlinski \cite{PapShp} by estimating the number of $P\in \mathcal{P}_+$ of degree $n\geq1$ that satisfy the conjecture. More recently, Kim and Murty \cite{KimMur} managed to prove Artin's conjecture without using the generalized Riemann hypothesis for function fields. Moreover, note that $S(T)$ has positive $d_3$-density. It was shown by Ballot \cite{Ba3} through a theorem of Shparlinski \cite[Theorem 3]{Shp}. Using Theorem \ref{MainTHMProportion} with $d=q^N-1$ for all $N\geq 1$, we see that $d_3(S(a))$ can be estimated with \cite[Theorem 3]{Shp}, or some variation of it, in a similar way, thus showing that $S(a)$ has positive $d_3$-density.

In Section \ref{SectionDef}, we give definitions and results related to function fields that make up our main toolbox. An important tool is an analogue of the Chebotarev Density Theorem for global function fields. Another gives necessary and sufficient conditions for primes in a global function field $K$ to completely split in a Galois extension $L/K$.

The idea is to describe $R_q(a,d)$ as a union of sets of primes that completely split in Kummer extensions of $K=\mathbb{F}_q(T)$, i.e., fields of the form $K(\zeta_n, a^{1/d})$, where $d,n$ are positive integers such that $(n,p)=1$ and $d\mid n$, $\zeta_n\in \Bar{\mathbb{F}}_q$ is a primitive $n$-th root of unity and $a\in K^\times$. From this and the above-mentioned analogue of Chebotarev's Density Theorem, we obtain an asymptotic formula for $R_q(a,d,N):=R_q(a,d)(N)$ that involves degrees of Kummer extensions. Therefore, to make our formula simpler and to later compute a closed-form formula for the density, we need to determine the degree of Kummer extensions of $K$. In Section \ref{SectionConstantFields}, we study the form an element $a\in K^\times$ may take when $K(a^{1/n})/K$ is a constant field extension, i.e., an extension of $\mathbb{F}_q$. These results are our primary tools for computing degrees of Kummer extensions in Section \ref{SectionKummer}.

In Section \ref{SectionProportion}, we show that $R_q(a,d,N)$ may be expressed in terms of the cardinality of sets of primes that completely split in some Kummer extensions of $K$. Applying the analogue of the Chebotarev Density Theorem, we find an asymptotic formula for $R_q(a,d,N)$ of the form
\[
|R_q(a,d,N)-\delta_q(a,d,N)\cdot q^N/N| \ll f(N),
\]
for some function $f$ and where $\delta_q(a,d,N)$ is the {\it proportion-density} of $R_q(a,d,N)$. (See Theorem \ref{MainTHMProportion}.) We obtain a formula for $\delta_q(a,d,N)$ that involves degrees of Kummer extensions of $K$.

Section \ref{SectionMorePreliminaries} is dedicated to preliminary results for the proofs of the main theorems. We prove a formula for the multiplicative order of an integer, and another for integers of the form $q^n-1$, where $n,q\geq 1$ are integers. Moreover, we give a property of the degree of constant fields of some special Kummer extensions.

Our main results, Theorem \ref{THMRqNod1d2} and Theorem \ref{MAINTHM}, on the existence or non-existence of the $d_1$ and $d_3$-densities of $R_q(a,d)$ are proved in Section \ref{SectionMainTheorems}. The asymptotic formula given in Theorem \ref{MAINTHM} revolves around a technique used by Ballot that consists of partitioning $\mathbb{N}$ into adequate disjoint arithmetic progressions. For all $n\geq 1$ that belong in the same arithmetic progression, we find that $\delta_q(a,d,n)$ is a constant independent of $n$, thus simplifying most calculations.

Under some assumption on the degree of constant fields of some Kummer extensions, we prove in Section \ref{SectionClosedD3} that $d_3(R_q(a,d))$ can be written in a closed-form formula. (See Theorem \ref{THMClosedFormd3}.)

Throughout this paper, the letters $l$ and $p$ denote prime numbers, the letter $q$ denotes a power of $p$, and the letters $d,n$ and $N$ denote positive integers with $d\mid n$ and $p\nmid n$. Given an integer $d$, we let $d^\infty$ denote the {\it supernatural number}
\[
d^\infty = \prod_{l\mid d} l^\infty.
\]
This notation allows us to consider positive divisors $v$ of $d^\infty$, i.e., $v\mid d^t$ for some $t\geq 1$, and to use notation such as
\[
(k,d^\infty) = \prod_{l\mid d} l^{v_l(k)},
\]
where $(a,b)$ denotes the gcd of $a$ and $b$, and $v_l$ denotes $l$-adic valuation. Note that $k\mapsto (k,d^\infty)$ is completely multiplicative, while $k\mapsto (k,d)$ is only multiplicative. We write $[a,b]$ for the lcm of two integers $a$ and $b$. Given a field $F$, we denote by $(F^\times)^k$ the set of $k$-th powers in $F$ and by $\Bar{F}$ its algebraic closure. We write $\omega$, $\tau$, $\mu$, $\varphi$ and $\psi$ to denote the number of distinct prime factors function, the number of divisors function, the Möbius function, Euler's totient function and Dedekind psi function, respectively. For a multiplicative group $G$ and $g\in G$, we let $\mathrm{ord}_G(g)$ and $\mathrm{ind}_G(g)$ denote respectively the order and the index of $g$. Particular cases include $G=(\mathbb{Z}/n\mathbb{Z})^\times$, for which we use the notation $\mathrm{ord}_n(g)$ and $\mathrm{ind}_n(g)$, and $G=(\mathbb{F}_q[T]/(P))^\times$, with the notation $\mathrm{ord}_P(g)$ and $\mathrm{ind}_P(g)$, where $n\geq 1$ and $P\in \mathbb{F}_q[T]$. We let the letters $K$ and $A$ denote respectively the rational function field $\mathbb{F}_q(T)$ and its integer ring $\mathbb{F}_q[T]$. For $f\in A$ non-zero, we let $\Tilde{f}$ denote the monic part of $f$, that is, the unique monic polynomial in $A$ such that $f=u \Tilde{f}$ for some $u\in\mathbb{F}_q^\times$.


\section{Known results}\label{SectionDef}

We use this section to state two important results for our work. The first theorem is a special case of a theorem that takes various forms in literature. It is usually referred to as the Chebotarev Density Theorem for global function fields. (See \cite[Proposition 6.4.8]{FriJar}.) It gives a bound on the number of primes in $K$ of a fixed degree satisfying a certain property. Since there are finitely many primes of degree $N$, the name ``density'' is not the most accurate. We use the term {\it proportion-density} instead to refer to the ``density'' number described in the theorem. We denote by $g_L$ the genus of a field $L$ and by $\mathcal{P}_+$ the set of monic irreducible polynomials in $A$.

\begin{theorem}\label{Chebo}
    Let $L/K$ be a Galois extension of global function fields and $\mathbb{F}_{q^n}$ be the constant field of $L$. Put $m:=[L:\mathbb{F}_{q^n}K]$ and
    \[
    \pi(N) :=\# \{ P\in \mathcal{P}_+ : \deg(P)=N \text{ and } P \text{ splits completely in } L \}.
    \]
    Then $\pi(N)=0$ if $n\nmid N$, and otherwise, we have
    \[
    \bigg| \pi(N)  - \frac{q^N}{Nm} \bigg| \leq \frac{2}{N m} \bigg( (m+g_L)q^{N/2} + m q^{N/4}+g_L+m \bigg).
    \]
\end{theorem}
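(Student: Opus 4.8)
The plan is to derive this from the effective Chebotarev density theorem for global function fields (\cite[Proposition 6.4.8]{FriJar}); I describe how I would organise that argument. Write $\bar G=\mathrm{Gal}(L/K)$; restriction to the constant field gives a surjection $\bar G\twoheadrightarrow\mathrm{Gal}(\mathbb{F}_{q^n}/\mathbb{F}_q)\cong\mathbb{Z}/n\mathbb{Z}$ with kernel $H=\mathrm{Gal}(L/\mathbb{F}_{q^n}K)$, so $|H|=m$. First I would pass from polynomials to places: the monic irreducibles of degree $N$ in $A$ are exactly the degree-$N$ places of $K=\mathbb{F}_q(T)$ other than the place at infinity (of degree $1$), so $\pi(N)$ equals the number $\Pi(N)$ of degree-$N$ places of $K$ splitting completely in $L$, except possibly for $N=1$, where the two differ by at most $1$ (harmlessly absorbed by the error term). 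If $n\nmid N$, then for any unramified degree-$N$ place $P$ the image of $\mathrm{Frob}_P$ in $\mathbb{Z}/n\mathbb{Z}$ is $N\bmod n\neq 0$, so $\mathrm{Frob}_P\neq 1$ and $P$ does not split completely; ramified places never do either, so $\Pi(N)=0$, as asserted.

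Now suppose $n\mid N$. Since a ramified place never splits completely and an unramified one does iff its Frobenius is trivial, the orthogonality relation $\mathbf 1_{g=1}=\tfrac1{|\bar G|}\sum_{\chi\in\mathrm{Irr}(\bar G)}\chi(1)\,\overline{\chi(g)}$ gives $\Pi(N)=\tfrac1{|\bar G|}\sum_{\chi}\chi(1)\sum_{\deg P=N,\ P\,\mathrm{unram}}\overline{\chi(\mathrm{Frob}_P)}$. I would split $\mathrm{Irr}(\bar G)$ into the $n$ characters trivial on $H$ (equivalently, factoring through $\bar G\to\mathbb{Z}/n\mathbb{Z}$) and the rest. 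A character trivial on $H$ takes the value $1$ on $\mathrm{Frob}_P$ because $n\mid N$, so these $n$ characters contribute $\tfrac n{|\bar G|}\,I_N=\tfrac{q^N}{Nm}+O\!\big(\tfrac{q^{N/2}}{Nm}\big)$ — this is the main term, and it is here that the ``geometric'' degree $m$, rather than $|\bar G|=nm$, enters. For the remaining characters, the weighted product $\prod_{\chi}L(u,\chi)^{\chi(1)}$ of their Artin $L$-functions (with $u=q^{-s}$) equals the numerator of $\zeta_L$ written in $u$: a polynomial of degree $2ng_L$ whose inverse roots have absolute value $q^{1/2}$ by Weil's Riemann Hypothesis for curves (the exponent $2ng_L$ resulting from the conductor--discriminant and tower formulas and Riemann--Hurwitz for $L/\mathbb{F}_{q^n}K$, which has genus $0$). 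Because $\log$ converts this weighted product into the weighted sum of the $\log L(u,\chi)$, the remaining characters contribute to $\Pi(N)$, up to lower-order terms, $\tfrac1{N|\bar G|}$ times the coefficient of $u^N$ in the logarithmic derivative of that polynomial, which is $O\!\big(\tfrac{g_L\,q^{N/2}}{Nm}\big)$ by Weil's bound. Adding in the contribution $O(q^{N/2}/N)$ of places of degree a proper divisor of $N$ (where $\sum_{d\mid N,\,d<N}q^d\le 2q^{N/2}$) and the $O(g_L+m)$ contribution of the $O(g_L+m)$ ramified places (a crude bound, refined in \cite{FriJar}), one gets $\Pi(N)=\tfrac{q^N}{Nm}+O\!\big(\tfrac{(g_L+m)q^{N/2}+g_L+m}{Nm}\big)$, of exactly the shape claimed; \cite[Proposition 6.4.8]{FriJar} carries out the careful accounting of constants, including the emergence of the $q^{N/4}$ term.

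The analytic input — Weil's Riemann Hypothesis for curves over $\mathbb{F}_{q^n}$, used through the statement that the numerator of $\zeta_L$ is a polynomial of degree $2ng_L$ in $u$ with inverse roots of absolute value $q^{1/2}$ — I would treat as a black box. The real work, and the main obstacle, is the bookkeeping that converts it into the displayed inequality uniformly in $N$ with the precise constants $m$ and $g_L$: bounding $\sum_\chi\chi(1)\deg L(u,\chi)$ and the number of ramified places via the discriminant formulas, controlling the prime-power places, and keeping straight the two notions of degree — over $\mathbb{F}_q$ for $K$ versus over $\mathbb{F}_{q^n}$ for $L$ — which diverge precisely because $L$ may have a larger field of constants than $K$, the same phenomenon responsible for the $n\nmid N$ versus $n\mid N$ dichotomy.
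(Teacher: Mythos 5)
Your proposal is consistent with the paper, which offers no proof of this theorem at all: it is stated as a known result with exactly the citation to Fried--Jarden, Proposition 6.4.8, that you ultimately rely on for the precise constants. Your sketch of the underlying argument --- character orthogonality, splitting the irreducible characters of $\mathrm{Gal}(L/K)$ according to whether they are trivial on $\mathrm{Gal}(L/\mathbb{F}_{q^n}K)$, and Weil's Riemann Hypothesis applied to the numerator of $\zeta_L$ --- is a correct outline of how that cited result is proved, so there is nothing to fault.
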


The second result gives necessary and sufficient conditions for a prime to split completely in some Kummer extensions of $K$.

\begin{theorem}\label{splitting}
    Let $n\geq 1$ be an integer with $p\nmid n$ and $a\in K^\times$. A prime $P\in A$ such that $v_P(a)=0$ splits completely in $K(\zeta_n,a^{1/d})$ if and only if
    \[
    NP\equiv 1 \pmod{n} \quad \text{and} \quad a^{\frac{NP-1}{d}} \equiv 1 \pmod{P}.
    \]
\end{theorem}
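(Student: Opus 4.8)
The plan is to reduce the splitting behavior of $P$ in the compositum $K(\zeta_n, a^{1/d})$ to the splitting behavior in the two intermediate fields $K(\zeta_n)$ and $K(\zeta_n, a^{1/d})$ over $K(\zeta_n)$, since $P$ splits completely in a compositum if and only if it splits completely in each factor. First I would handle $K(\zeta_n)/K$: this is the cyclotomic function field obtained by adjoining $n$-th roots of unity (with $p \nmid n$), and it is a constant field extension of degree equal to $\mathrm{ord}_n(q)$; a prime $P$ of degree $\deg P$ splits completely in it precisely when $\mathbb{F}_{q^{\deg P}}$ contains $\mu_n$, i.e. when $n \mid q^{\deg P} - 1$, which is the condition $NP \equiv 1 \pmod n$. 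This is the standard behavior of primes in constant field extensions together with the description of the residue field $A/P \cong \mathbb{F}_{NP}$.

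Next, assuming $NP \equiv 1 \pmod n$, so that $P$ already splits completely in $K(\zeta_n)$, I would analyze a prime $\mathfrak{P}$ of $K(\zeta_n)$ above $P$ in the extension $K(\zeta_n, a^{1/d})/K(\zeta_n)$. The residue field at $\mathfrak{P}$ is still $\mathbb{F}_{NP}$, and since $d \mid n \mid NP - 1$, the group $\mathbb{F}_{NP}^\times$ is cyclic of order divisible by $d$ and contains a full set of $d$-th roots of unity. By Kummer theory (or directly by Hensel/Dedekind-type reduction), $\mathfrak{P}$ splits completely in $K(\zeta_n, a^{1/d})$ if and only if $a$ is a $d$-th power in the residue field $\mathbb{F}_{NP}$, which — because $\mathbb{F}_{NP}^\times$ is cyclic of order $NP-1$ — is equivalent to $a^{(NP-1)/d} \equiv 1 \pmod P$. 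Here I use that $v_P(a) = 0$ so that $a$ reduces to a nonzero element of $A/P$.

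Combining the two steps: $P$ splits completely in $K(\zeta_n, a^{1/d})$ iff it splits completely in $K(\zeta_n)$ \emph{and} some (equivalently every) prime above it splits completely in $K(\zeta_n, a^{1/d})/K(\zeta_n)$, which together give exactly $NP \equiv 1 \pmod n$ and $a^{(NP-1)/d} \equiv 1 \pmod P$. For the converse direction one checks that if $NP \equiv 1 \pmod n$ fails then $P$ does not even split in $K(\zeta_n)$, and if it holds but the power congruence fails then $a$ is a non-$d$-th-power mod $P$ and $\mathfrak{P}$ is inert (or at least non-split) in the Kummer step. The main obstacle I anticipate is making the Kummer-step argument fully rigorous in the function field setting: one must be careful that $\mathfrak{P}$ is unramified in $K(\zeta_n, a^{1/d})/K(\zeta_n)$ (which uses $p \nmid n$, $d \mid n$, and $v_P(a) = 0$ to rule out ramification coming from the polynomial $X^d - a$), and that the factorization of $X^d - a$ modulo $\mathfrak{P}$ genuinely governs the splitting — this is where invoking the precise hypotheses, rather than a bare appeal to Kummer theory over a field containing enough roots of unity, matters.
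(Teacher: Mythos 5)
Your proposal is correct and is essentially the argument the paper invokes: the paper's proof simply refers to Rosen's Proposition 10.6, whose proof proceeds exactly as you describe — splitting in the compositum reduces to the constant field extension $K(\zeta_n)$ (giving $NP\equiv 1\pmod n$) and then the Kummer step over $K(\zeta_n)$, where Euler's criterion in the cyclic group $(A/P)^\times$ of order $NP-1$ converts ``$a$ is a $d$-th power mod $P$'' into $a^{(NP-1)/d}\equiv 1\pmod P$. Your attention to unramifiedness via $p\nmid n$ (hence $p\nmid d$) and $v_P(a)=0$ is exactly the point that makes the Dedekind-type reduction legitimate, so nothing is missing.
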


\begin{proof}
    It suffices to follow the proof of \cite[Proposition 10.6]{Ro}.
\end{proof}


\section{On constant field extensions}\label{SectionConstantFields}

Let $M/L$ be an algebraic extension of function fields. We say that $M$ is a constant field extension of $L$ if $M=(M\cap \Bar{\mathbb{F}}_q)L$. Similarly, we say that $M$ is a geometric extension of $L$ if $M\ne L$ and $M\cap \Bar{\mathbb{F}}_q=L\cap \Bar{\mathbb{F}}_q$. Note that it is likely that $M/L$ is neither a geometric nor a constant field extension, but we can always split it in two such extensions. In this section, we give necessary and sufficient conditions for an algebraic extension $K(a^{1/n})/K$, $a\in K^\times$, to be a constant field extension.

\begin{theorem}\label{iffgeoTHM}
    Let $a\in K^\times$. Then, $K(a^{1/n})/K$ is a constant field extension if and only if $a=\mu b^n$ for some $b\in K^\times$ and $\mu\in \mathbb{F}_q^\times$.
\end{theorem}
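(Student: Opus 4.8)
The plan is to prove both implications, treating the ``if'' direction as a quick verification and concentrating the real work on the ``only if'' direction, which rests on unique factorization in $\mathbb{F}_{q^m}[T]$. For the backward direction, suppose $a=\mu b^n$ with $\mu\in\mathbb{F}_q^\times$ and $b\in K^\times$, and fix a root $\alpha$ of $X^n-a$. Then $(\alpha/b)^n=\mu$, so setting $\gamma:=\alpha/b$ I obtain a root of $X^n-\mu\in\mathbb{F}_q[X]$; hence $\gamma$ is algebraic over $\mathbb{F}_q$, i.e. $\gamma\in\Bar{\mathbb{F}}_q$. Since $b\in K^\times$, we have $K(\alpha)=K(\gamma)=\mathbb{F}_q(\gamma)\cdot K$, and because $\mathbb{F}_q(\gamma)$ is a finite subfield of $\Bar{\mathbb{F}}_q$ this exhibits $K(a^{1/n})$ as a constant field extension of $K$.

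For the forward direction, I would first unwind the definition. If $M:=K(a^{1/n})$ is a constant field extension of $K$, then $M=(M\cap\Bar{\mathbb{F}}_q)K$; since $[M:K]\le n$, the constant field $M\cap\Bar{\mathbb{F}}_q$ is a finite extension of $\mathbb{F}_q$, say $\mathbb{F}_{q^m}$, so that $M=\mathbb{F}_{q^m}K=\mathbb{F}_{q^m}(T)$. In particular the chosen root $\alpha=a^{1/n}$ lies in $\mathbb{F}_{q^m}(T)$.

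The heart of the argument is then a comparison of factorizations in the UFD $\mathbb{F}_{q^m}[T]$. Writing $\alpha=c\prod_i\pi_i^{e_i}$ with $c\in\mathbb{F}_{q^m}^\times$ and the $\pi_i$ monic irreducible, I get $a=\alpha^n=c^n\prod_i\pi_i^{ne_i}$, so every monic irreducible of $\mathbb{F}_{q^m}[T]$ occurs in $a$ with exponent divisible by $n$. On the other hand, factoring $a$ over $\mathbb{F}_q[T]$ as $a=\lambda\prod_j Q_j^{f_j}$ with $\lambda\in\mathbb{F}_q^\times$ and $Q_j$ monic irreducible, each $Q_j$ splits in $\mathbb{F}_{q^m}[T]$ into \emph{distinct} monic irreducibles (finite-field irreducibles being separable), each inheriting the exponent $f_j$. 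Unique factorization then forces $n\mid f_j$ for every $j$. Setting $b:=\prod_j Q_j^{f_j/n}\in K^\times$ and $\mu:=\lambda\in\mathbb{F}_q^\times$ yields $a=\mu b^n$, as desired.

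The step I expect to require the most care is this final factorization comparison: one must be sure that passing from the $\mathbb{F}_q$-factorization of $a$ to its $\mathbb{F}_{q^m}$-factorization neither merges nor ramifies the prime factors, which is precisely where separability of irreducibles over finite fields enters. I would also record that the statement is independent of the chosen $n$-th root, since any two roots differ by a factor $\zeta_n\in\Bar{\mathbb{F}}_q$, so the constant field property holds for one root if and only if it holds for all of them.
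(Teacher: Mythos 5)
Your proof is correct, and it takes a genuinely different route from the paper's. The paper establishes the converse by first reducing to $a\in A$, then to prime-power exponents $n=l^k$, and then inducting on $k$, with the base case $n=l$ imported from \cite[Lemma 3.3]{HoWa}; the inductive step manipulates monic parts $\Tilde{a},\Tilde{b},\Tilde{x}$ and an $l$-th root of unity, and rational $a=f/g$ is handled only at the end via $K(a^{1/n})=K((fg^{n-1})^{1/n})$. You instead observe that a constant field extension $K(a^{1/n})$ must equal $\mathbb{F}_{q^m}(T)$ for some $m\geq 1$, and compare unique factorizations there: every exponent in the $\mathbb{F}_{q^m}[T]$-factorization of $a=\alpha^n$ is a multiple of $n$, while the $\mathbb{F}_q[T]$-factorization $a=\lambda\prod_j Q_j^{f_j}$ refines over $\mathbb{F}_{q^m}$ into \emph{distinct}, pairwise coprime monic irreducibles each carrying the exponent $f_j$ (separability of the $Q_j$ over the perfect field $\mathbb{F}_q$, and preservation of coprimality, being exactly the points you flag); uniqueness then forces $n\mid f_j$ for every $j$, whence $a=\lambda\big(\prod_j Q_j^{f_j/n}\big)^n$ with $\lambda\in\mathbb{F}_q^\times$. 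Your argument is self-contained, uniform in $n$ (no reduction to prime powers, no induction, no external lemma), and treats rational $a$ directly by allowing negative exponents. What the paper's route buys is mainly economy of exposition --- the prime case is outsourced to \cite{HoWa}, and the monic-part formalism $\Tilde{\cdot}$ it exercises here is reused heavily in later sections --- but as a standalone proof yours is arguably cleaner, and the closing remark that the conclusion is independent of the chosen $n$-th root (any two roots differing by an element of $\Bar{\mathbb{F}}_q$) correctly disposes of the only ambiguity in the statement.
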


\begin{proof}
    The ``if'' part is trivial. For the converse, we start with the case $a\in A$. First assume that the result holds for prime powers and write $n=q_1\cdots q_s$, where the $q_i$'s are powers of distinct primes and $s\geq 2$. We have
    \[
    a=\mu_1 b_1^{q_1}=\cdots =\mu_s b_s^{q_s} \quad \text{and} \quad \Tilde{a}= \Tilde{b}_1^{q_1} = \cdots = \Tilde{b}_s^{q_s},
    \]
    for some non-zero $b_i\in A$ and $\mu_i\in \mathbb{F}_q^\times$. Since the $q_i$'s are powers of distinct primes, we see that $\Tilde{b}_1 \in (K^\times)^{q_i}$ for all $1\leq i \leq s$. Thus, $\Tilde{a} = b^n$ for some non-zero $b\in A$, and $a=\mu \Tilde{a} = \mu b^n$. Hence it suffices to prove the result for prime powers. Let $l$ be a prime number and $k\geq 1$. We proceed by induction on $k\geq 1$ to show the statement holds for all $n=l^k$. The base case follows from \cite[Lemma 3.3]{HoWa}. Assume the statement holds for some $k\geq 1$ and that $K(a^{1/l^{k+1}})/K$ is a constant field extension. In particular, $K(a^{1/l})/K$ a constant field extension and we may write $a=\mu b^l$ by \cite[Lemma 3.3]{HoWa}. Let $x$ be an $l^{k+1}$-th root of $a$ in $L:=K(a^{1/l^{k+1}})$. Then
    \[
    \Tilde{a}=\Tilde{b}^l = \Tilde{x}^{l^{k+1}},
    \]
    and we obtain $\Tilde{b} = \zeta_l \Tilde{x}^{l^k}$ for some $l$-th root of unity $\zeta_l$. But $\Tilde{b}$ and $\Tilde{x}$ are both monic polynomials in the ring of integers of $L$, so that $\zeta_l=1$. Because $\Tilde{x}\in L$, we find that $K(\Tilde{b}^{1/l^k})$ is a subfield of $L$, thus $K(\Tilde{b}^{1/l^k})/K$ is a constant field extension. By the induction hypothesis, we have $\Tilde{b} = \lambda c^{l^k} = \Tilde{c}^{l^k}$ for some $\lambda\in \mathbb{F}_q^\times$ and $c\in K^\times$. Hence $\Tilde{a} = \Tilde{b}^l = \Tilde{c}^{l^{k+1}}$ and $a=\mu \Tilde{c}^{l^{k+1}}$, where $\mu \in \mathbb{F}_q^\times$ is the leading coefficient of $a$. We successfully proved the result for $a\in A$. If $a=f/g \in K^\times$, then $K(a^{1/n}) = K((fg^{n-1})^{1/n})$ and by the above, $fg^{n-1}$ is of the form $\mu b^n$. Hence $a=\mu (b/g)^n$.
\end{proof}


\section{The degree of Kummer extensions of rational function fields}\label{SectionKummer}

We extend the notation $\Tilde{a}$ to rational functions $a=f/g\in K^\times$ by $\Tilde{a} =\Tilde{f}/\Tilde{g} $. We denote by $\lambda\in \mathbb{F}_q^\times$ the unique constant such that $a=\lambda \Tilde{a}$. Let $h$ denote the largest integer $t\geq 1$ such that $\Tilde{a} \in (K^\times)^t$. In this section, we study Kummer extensions of $K=\mathbb{F}_q(T)$, i.e., fields of the form $K(\zeta_n,a^{1/d})$, where $\zeta_n \in \Bar{\mathbb{F}}_q$ is a primitive $n$-th root of unity and $a \in K^\times$. We find formulas for the following field degrees:
\[
[K(\zeta_n,a^{1/d}):\mathbb{F}_{n,d}K] \quad \text{and} \quad  [\mathbb{F}_{n,d}:\mathbb{F}_q],
\]
where $\mathbb{F}_{n,d}$ denotes the constant field of $K(\zeta_n,a^{1/d})$. The key result we use is the following theorem on polynomials of the form $X^n-a$:

\begin{theorem}\label{ThmLangIrr}
    Let $K$ be a field and $a\in K^\times$. Then $X^n-a$ is irreducible over $K$ if and only if $a\not \in (K^\times)^l$ for all $l\mid n$ and $a\not \in -4 (K^\times)^4$ if $4\mid n$.
\end{theorem}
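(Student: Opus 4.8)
The plan is to prove this classical (Vahlen--Capelli) irreducibility criterion in the usual two stages: reduce to the case where $n$ is a prime power, treat prime powers by induction, and recombine using coprimality of degrees. I would dispose of the ``only if'' direction first, since it only requires exhibiting explicit proper factors. If $a=b^{l}$ with $l$ a prime dividing $n$, then setting $Y=X^{n/l}$ gives $X^{n}-a=Y^{l}-b^{l}$, which is divisible by $Y-b=X^{n/l}-b$, a factor of degree $n/l$ with $1\leq n/l<n$. If $4\mid n$ and $a=-4b^{4}$, then with $Y=X^{n/4}$ the identity $Y^{4}+4b^{4}=(Y^{2}-2bY+2b^{2})(Y^{2}+2bY+2b^{2})$ gives a factorization of $X^{n}-a$ into two polynomials of degree $n/2$. (In characteristic $2$ one has $-4(K^{\times})^{4}=\{0\}$, so the second excluded case does not occur.)

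For the ``if'' direction, fix a root $\alpha$ of $X^{n}-a$ in $\Bar{K}$; it suffices to show $[K(\alpha):K]=n$, since then $X^{n}-a$ is the minimal polynomial of $\alpha$. I would first handle the prime case $n=l$. Assuming $a\notin(K^{\times})^{l}$, if $X^{l}-a$ had a monic factor of degree $d$ with $0<d<l$, its constant term would be of the form $(-1)^{d}\zeta\alpha^{d}$ for some $l$-th root of unity $\zeta$, so $b:=\zeta\alpha^{d}\in K^{\times}$ satisfies $b^{l}=a^{d}$; since $\gcd(d,l)=1$, a B\'ezout relation $ud+vl=1$ gives $a=(b^{u}a^{v})^{l}\in(K^{\times})^{l}$, a contradiction. (When $l=\mathrm{char}\,K$, the polynomial $X^{l^{s}}-a$ is purely inseparable and $a\notin(K^{\times})^{l}$ again forbids proper factors, which settles that case; from now on I take $\mathrm{char}\,K\nmid n$.)

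Next, the prime-power case $n=l^{s}$, by induction on $s$, the base case $s=1$ being the prime case. Put $\beta=\alpha^{l}$, so $\beta^{l^{s-1}}=a$; by the inductive hypothesis $X^{l^{s-1}}-a$ is irreducible, hence $[K(\beta):K]=l^{s-1}$, and it remains to show $X^{l}-\beta$ is irreducible over $K(\beta)$, equivalently (by the prime case over $K(\beta)$) that $\beta\notin(K(\beta)^{\times})^{l}$. For $l$ odd this is immediate from a norm computation: $N_{K(\beta)/K}(\beta)=a$, so $\beta\in(K(\beta)^{\times})^{l}$ would force $a\in(K^{\times})^{l}$. The case $l=2$ is the delicate one and is exactly where the exceptional hypothesis is used. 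Its own base case $X^{4}-a$ is checked directly: if $\sqrt{a}=(u+v\sqrt{a})^{2}$ with $u,v\in K$, then expanding via $(\sqrt{a})^{2}=a$ forces $v\neq 0$ and $a=-4\bigl(1/(2v)\bigr)^{4}\in-4(K^{\times})^{4}$, against the hypothesis. For $s\geq 3$, the inductive step norms down not to $K$ but to the fixed quadratic subfield $F=K(\sqrt{a})\subseteq K(\beta)$ (note $\beta^{2^{s-2}}$ is a square root of $a$): one computes $N_{K(\beta)/F}(\beta)=-\sqrt{a}$, so $\beta$ being a square in $K(\beta)$ would make $-\sqrt{a}$ a square in $K(\sqrt{a})$, and the same expansion once more yields $a\in-4(K^{\times})^{4}$, a contradiction.

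Finally, for general $n=\prod_{i}l_{i}^{s_{i}}$, each $\alpha^{n/l_{i}^{s_{i}}}$ is a root of $X^{l_{i}^{s_{i}}}-a$, irreducible by the prime-power case, so $l_{i}^{s_{i}}\mid[K(\alpha):K]$ for every $i$; since the $l_{i}^{s_{i}}$ are pairwise coprime, $n\mid[K(\alpha):K]\leq n$, hence $[K(\alpha):K]=n$ and $X^{n}-a$ is irreducible. I expect the main obstacle to be the $l=2$ prime-power step: deciding whether $\beta=\alpha^{2}$ acquires a square root after the base change to $K(\beta)$ is precisely what the condition $a\notin-4(K^{\times})^{4}$ governs, which is why it appears in the statement. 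Everything else amounts to bookkeeping with degrees, norms, and B\'ezout relations.
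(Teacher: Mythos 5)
Your proof is correct, and it coincides in substance with the paper's: the paper does not prove this theorem but simply cites Lang's \emph{Algebra}, whose argument follows the same outline you give (reduction to prime powers, induction on the exponent via norm computations, the Sophie Germain identity for the exceptional factorization, and the direct verification that $\sqrt{a}$ becomes a square in $K(\sqrt{a})$ precisely when $a\in -4(K^\times)^4$). The one place you are a little thin is the purely inseparable case $X^{l^s}-a$ with $l=\operatorname{char}K$ and $s\ge 2$: a proper factor $(X-\alpha)^d$ only gives $\alpha^{\gcd(d,l^s)}\in K$ with $\gcd(d,l^s)=l^t$ for some $t<s$, from which $a=\bigl(\alpha^{l^t}\bigr)^{l^{s-t}}\in(K^\times)^l$ --- routine, but worth writing out since a bare B\'ezout relation does not apply when $\gcd(d,l^s)>1$.
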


\begin{proof}
    See \cite[Theorem 9.1]{Lang}.
\end{proof}

\begin{lemma}\label{degreKummerNonGeo}
    Let $\mu\in \mathbb{F}_q^\times$. We have
    \[
    [\mathbb{F}_q(\zeta_n,\mu^{1/d}):\mathbb{F}_q] = \frac{\mathrm{ord}_n(q) d}{(\mathrm{ind}_{\mathbb{F}_q(\zeta_n)^\times}(\mu),d)}.
    \]
\end{lemma}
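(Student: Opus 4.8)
The plan is to reduce everything to arithmetic in finite fields and to exploit the cyclicity of their multiplicative groups.

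\emph{Identifying the constant field.} Since $p\nmid n$, a field $\mathbb{F}_{q^k}$ contains a primitive $n$-th root of unity if and only if $n\mid q^k-1$, so $\mathbb{F}_q(\zeta_n)=\mathbb{F}_{q^f}$ with $f=\mathrm{ord}_n(q)$. Because of the standing hypothesis $d\mid n$, this already gives the key divisibility $d\mid q^f-1$; in particular $\zeta_d=\zeta_n^{n/d}\in\mathbb{F}_{q^f}$, so $\mathbb{F}_q(\zeta_n,\mu^{1/d})=\mathbb{F}_{q^f}(\mu^{1/d})$ is the splitting field of $X^d-\mu$ over $\mathbb{F}_{q^f}$ and $[\mathbb{F}_q(\zeta_n,\mu^{1/d}):\mathbb{F}_{q^f}]$ is unambiguous. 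By multiplicativity of degrees, and since $\mathbb{F}_q(\zeta_n)^\times=\mathbb{F}_{q^f}^\times$, it suffices to prove
\[
[\mathbb{F}_{q^f}(\mu^{1/d}):\mathbb{F}_{q^f}] = \frac{d}{(\mathrm{ind}_{\mathbb{F}_{q^f}^\times}(\mu),d)}.
\]

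\emph{From a degree to a divisibility condition.} Put $e:=\mathrm{ord}_{\mathbb{F}_{q^f}^\times}(\mu)$ and $j:=\mathrm{ind}_{\mathbb{F}_{q^f}^\times}(\mu)=(q^f-1)/e$. The extension $\mathbb{F}_{q^f}(\mu^{1/d})$ equals $\mathbb{F}_{q^{fm}}$ for the least $m\geq 1$ such that $\mathbb{F}_{q^{fm}}$ contains a $d$-th root of $\mu$: if $x$ is such a root with $x\in\mathbb{F}_{q^{fm}}$ and $m$ minimal, then $\mathbb{F}_{q^f}(x)=\mathbb{F}_{q^{fm'}}$ with $m'\mid m$, and $\mathbb{F}_{q^{fm'}}\ni x$ forces $m'=m$ by minimality. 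Now $\mathbb{F}_{q^{fm}}^\times$ is cyclic of order $q^{fm}-1$, and since $d\mid q^f-1\mid q^{fm}-1$ the subgroup of $d$-th powers has order $(q^{fm}-1)/d$; an element of order $e$ lies in it if and only if $e\mid (q^{fm}-1)/d$, i.e. if and only if $ed\mid q^{fm}-1$. So $[\mathbb{F}_{q^f}(\mu^{1/d}):\mathbb{F}_{q^f}]$ is the least $m\geq 1$ with $ed\mid q^{fm}-1$.

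\emph{Simplifying the divisibility.} From $q^{fm}-1=(q^f-1)\sum_{i=0}^{m-1}(q^f)^i$ and $q^f\equiv 1\pmod{q^f-1}$ one gets $q^{fm}-1\equiv (q^f-1)m\pmod{(q^f-1)^2}$. Dividing by $e$ (which divides $q^f-1$) yields $(q^{fm}-1)/e\equiv jm\pmod{j(q^f-1)}$. Since $d\mid q^f-1$, the condition $ed\mid q^{fm}-1$, equivalently $d\mid (q^{fm}-1)/e$, collapses to $d\mid jm$; its least positive solution is $m=d/(j,d)$. Multiplying by $f=\mathrm{ord}_n(q)$ gives the stated formula.

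The cyclic-group facts are routine; the one point requiring care is keeping the chain of divisibilities genuinely \emph{equivalent} — in particular noticing that although $ed$ need not divide $q^f-1$, the congruence $q^{fm}-1\equiv(q^f-1)m\pmod{(q^f-1)^2}$ combined with $d\mid q^f-1$ is exactly what reduces $ed\mid q^{fm}-1$ to the clean condition $d\mid jm$. The hypothesis $d\mid n$ is used twice (to get $\zeta_d\in\mathbb{F}_{q^f}$ and $d\mid q^f-1$) and is essential to the argument.
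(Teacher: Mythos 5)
Your proof is correct, but it takes a genuinely different route from the paper. The paper writes $\mu=x^u$ with $u$ the index, reduces the extension to $\mathbb{F}_q(\zeta_n,v^{1/d_0})$ with $d_0=d/(d,u)$, and then invokes the irreducibility criterion for $X^{d_0}-v$ (Theorem \ref{ThmLangIrr}), which requires checking $v\notin(\mathbb{F}_q(\zeta_n)^\times)^l$ for each prime $l\mid d_0$ and separately ruling out the exceptional case $v\in -4(\mathbb{F}_q(\zeta_n)^\times)^4$ when $4\mid d_0$. You instead stay entirely inside the arithmetic of finite cyclic groups: you characterize the degree as the least $m$ with $ed\mid q^{fm}-1$ and solve that divisibility explicitly via the congruence $q^{fm}-1\equiv(q^f-1)m\pmod{(q^f-1)^2}$. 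Each step of your argument checks out (the identification of the splitting field uses $d\mid n$ correctly, the reduction of $ed\mid q^{fm}-1$ to $d\mid jm$ is valid because $d\mid q^f-1$, and the two notions of index agree for cyclic groups). What your approach buys is that it is more elementary and self-contained --- no appeal to Lang's theorem and no case analysis around $-4(K^\times)^4$ --- but it is specific to finite fields; the paper's route has the advantage of running in parallel with the proof of Theorem \ref{degreeKummerTHM}, where the same irreducibility criterion is applied over the function field $\mathbb{F}_{n,d}K$, so the two arguments stay uniform.
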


\begin{proof}
    It is known that $ [\mathbb{F}_q(\zeta_n):\mathbb{F}_q]=\mathrm{ord}_n(q)$. Let $u$ be the index of $\mu$ in $\mathbb{F}_q(\zeta_n)^\times$, that is, the greatest positive divisor $t \mid q^{\mathrm{ord}_n(q)}-1$ such that $\mu=x^t$ for some $x\in \mathbb{F}_q(\zeta_n)^\times$. We have $\mathbb{F}_q(\zeta_n,\mu^{1/d}) = \mathbb{F}_q(\zeta_n,v^{1/d_0})$, where $d_0=d/(d,u)$ and $v^{(u,d)}=\mu$. We claim that $d_0$ is the degree of the extension $\mathbb{F}_q(\zeta_n,\mu^{1/d})/\mathbb{F}_q(\zeta_n)$. Indeed, let us show that $X^{d_0}-v$ is irreducible over $\mathbb{F}_q(\zeta_n)$ using Theorem \ref{ThmLangIrr}. Let $l\mid d_0$ be a prime. By contradiction, if we have $v=c^l$ for some $c\in \mathbb{F}_q(\zeta_n)$, then
    \[
    x^u = \mu = v^{(u,d)} = c^{l(u,d)}.
    \]
    Because $d_0\mid q^{\mathrm{ord}_n(q)}-1$ and by the maximality of $u$, we have $l(u,d) \mid u$. This yields a contradiction since $l\mid d_0$. Now, if $4\mid d_0$, assume by contradiction that $v=-4y^4$ for some $y\in \mathbb{F}_q(\zeta_n)$. Then, since $v$ is not a square in $\mathbb{F}_q(\zeta_n)$ by the above, we find that $-1$ is not a square in $\mathbb{F}_q(\zeta_n)$. Hence $4\nmid q^{\mathrm{ord}_n(q)}-1$, but $4\mid d_0$ and $d_0\mid n$ imply that $4\mid q^{\mathrm{ord}_n(q)}-1$. A contradiction.
\end{proof}

\begin{lemma}\label{LemmaConstantsFieldKnd}
    Let $a\in K^\times$. Then the greatest divisor $v$ of $d$ such that the extension $K(\zeta_n,a^{1/v})/K$ is a constant field extension is equal to $(d,h)$. In particular, we have $\mathbb{F}_{n,d}=\mathbb{F}_q(\zeta_n,\lambda^{1/(d,h)})$.
\end{lemma}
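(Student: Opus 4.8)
The plan is to prove the two assertions in turn, with Theorem~\ref{iffgeoTHM} doing most of the work.

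\emph{The greatest divisor.} Fix a divisor $v\mid d$. I would first reduce to the $\zeta_n$-free situation: $K(\zeta_n,a^{1/v})/K$ is a constant field extension if and only if $K(a^{1/v})/K$ is one. Indeed, adjoining $\zeta_n\in\Bar{\mathbb{F}}_q$ to a constant field extension of $K$ again yields a constant field extension, while conversely any intermediate field of a constant field extension of $K$ is itself a constant field extension (this rests on $\mathbb{F}_q$ being algebraically closed in $K$, so that the Galois group of a constant field extension is that of the corresponding extension of finite fields, whose subfields are all constant). By Theorem~\ref{iffgeoTHM}, $K(a^{1/v})/K$ is a constant field extension exactly when $a=\mu b^v$ with $\mu\in\mathbb{F}_q^\times$ and $b\in K^\times$; comparing monic parts, this is equivalent to $\Tilde{a}\in(K^\times)^v$, and writing $\Tilde{a}$ as a product of monic irreducibles of $A$ with integer exponents, it amounts to $v$ dividing each of those exponents, i.e. to $v\mid h$. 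Hence the divisors $v\mid d$ for which $K(\zeta_n,a^{1/v})/K$ is a constant field extension are precisely the divisors of $(d,h)$, and $(d,h)$ is the largest of them.

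\emph{The constant field, easy inclusion.} Put $v_0=(d,h)$, $d'=d/v_0$, $h'=h/v_0$, so $(d',h')=1$, and let $g\in K^\times$ be the monic element with $g^h=\Tilde{a}$; by maximality of $h$, $g$ is not a proper power in $K^\times$, and (splitting each $\mathbb{F}_q$-irreducible factor of $g$ into its distinct conjugate $\mathbb{F}_1$-irreducible factors, which all carry the same exponent) it is not a proper power in $\mathbb{F}_1(T)^\times$ for any finite field $\mathbb{F}_1\supseteq\mathbb{F}_q$ either. Since $v_0\mid h$ we have $a=\lambda\,\Tilde{a}=\lambda(g^{h'})^{v_0}$; because $\zeta_{v_0}\in\mathbb{F}_q(\zeta_n)$, the field $K(\zeta_n,a^{1/v_0})$ contains all $v_0$-th roots of $a$, each of which differs from $g^{h'}$ by a $v_0$-th root of $\lambda$, so $K(\zeta_n,a^{1/v_0})=K(\zeta_n,\lambda^{1/v_0})=\mathbb{F}_1(T)$ with $\mathbb{F}_1:=\mathbb{F}_q(\zeta_n,\lambda^{1/v_0})$, a constant field extension of $K$ with constant field $\mathbb{F}_1$. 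Since this lies inside $L:=K(\zeta_n,a^{1/d})$, we get $\mathbb{F}_1\subseteq L\cap\Bar{\mathbb{F}}_q=\mathbb{F}_{n,d}$.

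\emph{The reverse inclusion.} Write $E:=\mathbb{F}_1(T)=K(\zeta_n,\lambda^{1/v_0})$ and fix $\alpha$ with $\alpha^d=a$. Then $(\alpha^{d'}/g^{h'})^{v_0}=\alpha^{d}/g^{h}=\lambda$, so $\ell:=\alpha^{d'}/g^{h'}$ is a $v_0$-th root of $\lambda$ and hence lies in $E$, giving $\alpha^{d'}=\ell g^{h'}$. Taking a Bézout relation $ud'+wh'=1$, the element $\gamma:=g^u\alpha^w$ satisfies $\gamma^{d'}=g\ell^{w}$ and $\gamma^{h'}=\alpha\ell^{-u}$, so $\alpha\in E(\gamma)$ and therefore $L=E(\gamma)$ with $\gamma^{d'}=g\xi$, where $\xi:=\ell^w\in\mathbb{F}_1^\times$. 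If $d'=1$ then $L=E$ and $\mathbb{F}_{n,d}=\mathbb{F}_1$. Otherwise, since $g$ is not a proper power in $E^\times$ (and $\xi$, being a constant, does not affect monic parts), Theorem~\ref{ThmLangIrr} shows $X^{d'}-g\xi$ is irreducible over the rational function field $E$: either obstruction would force $g$ to be the $l$-th or $4$-th power of a monic element of $E$. As $\zeta_{d'}\in\mathbb{F}_q(\zeta_n)\subseteq E$, the extension $L/E$ is cyclic of degree $d'$, with intermediate fields exactly the $E((g\xi)^{1/w})$ for $w\mid d'$. Now $\mathbb{F}_{n,d}E$ is one of these and is a constant field extension of $E$; applying Theorem~\ref{iffgeoTHM} over $E=\mathbb{F}_1(T)$ (its statement and proof are insensitive to which finite field is the constant field), $E((g\xi)^{1/w})/E$ is a constant field extension only if $g\xi=\mu'b'^{w}$ with $\mu'\in\mathbb{F}_1^\times$ and $b'\in E^\times$, which on comparing monic parts gives $g=\Tilde{b'}^{w}$, forcing $w=1$. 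Hence $\mathbb{F}_{n,d}E=E$, so $\mathbb{F}_{n,d}\subseteq E\cap\Bar{\mathbb{F}}_q=\mathbb{F}_1$, and together with the previous paragraph $\mathbb{F}_{n,d}=\mathbb{F}_q(\zeta_n,\lambda^{1/(d,h)})$.

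\emph{Expected obstacle.} The first assertion is essentially a direct translation of Theorem~\ref{iffgeoTHM}. The delicate step is the reverse inclusion for the constant field: one has to present $L$ over the constant field extension $E=K(\zeta_n,\lambda^{1/(d,h)})$ by a generator $\gamma$ whose $d'$-th power is a polynomial times a constant, so that the maximal constant subextension of $L/E$ can be detected by Theorem~\ref{iffgeoTHM} and Theorem~\ref{ThmLangIrr} applied over $E$. The Bézout manipulation, together with the bookkeeping of monic parts and of the property ``not a proper power'' after enlarging the constant field, is where the care is needed.
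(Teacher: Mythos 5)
Your proof is correct. For the first assertion you follow essentially the paper's route: everything reduces to Theorem~\ref{iffgeoTHM}, the only cosmetic difference being that you first pass to the $\zeta_n$-free extension $K(a^{1/v})/K$ and phrase the criterion as $v\mid h$, whereas the paper exhibits $K(\zeta_n,a^{1/(d,h)})=\mathbb{F}_q(\zeta_n,\lambda^{1/(d,h)})K$ directly and rules out $v=lD$ by contradiction. Where you genuinely diverge is in the identification $\mathbb{F}_{n,d}=\mathbb{F}_q(\zeta_n,\lambda^{1/(d,h)})$. The paper asserts that $K(\zeta_n,a^{1/(d,h)})$ is the maximal subfield of $K(\zeta_n,a^{1/d})$ that is constant over $K$, but its verification only tests subfields of the special form $K(\zeta_n,a^{1/v})$ with $v\mid d$; that the maximal constant subextension must have this form is left implicit. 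Your third paragraph supplies exactly the missing argument: the B\'ezout presentation $L=E(\gamma)$ with $\gamma^{d'}=g\xi$, the irreducibility of $X^{d'}-g\xi$ via Theorem~\ref{ThmLangIrr}, and Kummer theory over $E=K(\zeta_n,\lambda^{1/(d,h)})$ together enumerate all intermediate fields of $L/E$ and show, via Theorem~\ref{iffgeoTHM} applied over $E$, that none of them except $E$ is constant over $E$, whence $\mathbb{F}_{n,d}\subseteq\mathbb{F}_1$. This costs extra bookkeeping --- transferring ``not a proper power'' from $\mathbb{F}_q(T)$ to $\mathbb{F}_1(T)$ and invoking Theorem~\ref{iffgeoTHM} over a larger constant field, both of which you justify correctly --- but it buys a complete proof of the ``in particular'' clause rather than an appeal to an unproved maximality among all subfields.
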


\begin{proof}
    We claim that $K(\zeta_n,a^{1/D})$, with $D=(d,h)$, is the maximal subfield $M$ of $K(\zeta_n,a^{1/d})$ such that $M/K$ is a constant field extension. Write $h=Dk$ and $\Tilde{a}=\Tilde{b}^{Dk}$ for some $k\geq 1$ and $\Tilde{b} \in K^\times$. Then
    \[
    K(\zeta_n,a^{1/D})=K(\zeta_n,\lambda^{1/D} \Tilde{b}^k) = K(\zeta_n,\lambda^{1/D}) = \mathbb{F}_{q}(\zeta_n,\lambda^{1/D}) K
    \]
    is a constant field extension of $K$. Next, we prove that $D$ is maximal. By contradiction, assume there exists a prime $l$ such that $lD\mid d$ and that $ K(\zeta_n,a^{1/lD})$ is a constant field extension of $K$. Then, $K(a^{1/lD})/K$ is a constant field extension and
    \[
    a=\omega c^{lD} = \lambda \Tilde{c}^{lD},
    \]
    for some $\omega, \lambda\in \mathbb{F}_q^\times$ and $c\in K^\times$, by Theorem \ref{iffgeoTHM}. Hence $\Tilde{a}=\Tilde{c}^{lD}$ and, by maximality of $h$, we find that $lD \mid h$. A contradiction to $lD\mid d$.
\end{proof}

\begin{theorem}\label{degreeKummerTHM}
    Let $a\in K^\times$. We have $[K(\zeta_n,a^{1/d}):\mathbb{F}_{n,d}K] = d/(d,h)$.
\end{theorem}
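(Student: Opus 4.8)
The plan is to reduce the claim to the irreducibility of a single binomial over $F := \mathbb{F}_{n,d}K$, mimicking the proof of Lemma~\ref{degreKummerNonGeo} but keeping track of monic parts. Set $D = (d,h)$ and $e = d/D$, so the goal becomes $[K(\zeta_n,a^{1/d}):F] = e$. Let $\Tilde{b}$ be the unique monic polynomial with $\Tilde{a} = \Tilde{b}^{\,h}$; by maximality of $h$ we have $\Tilde{b} \notin (K^\times)^l$ for every prime $l$. Writing $h = Dk$, the relation $D = (d,h) = (De, Dk) = D(e,k)$ forces $(e,k) = 1$.

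First I would invoke Lemma~\ref{LemmaConstantsFieldKnd} to identify $F$ with $K(\zeta_n, a^{1/D})$ and to fix the element $\alpha := a^{1/D} = \lambda^{1/D}\Tilde{b}^{\,k} \in F$, where $\lambda^{1/D}$ is the chosen $D$-th root of $\lambda$ lying in $\mathbb{F}_{n,d} = \mathbb{F}_q(\zeta_n,\lambda^{1/D})$. Choosing the $d$-th root of $a$ compatibly so that $(a^{1/d})^e = \alpha$ (legitimate since $\zeta_d = \zeta_n^{n/d} \in F$, so $K(\zeta_n, a^{1/d})$ does not depend on the root), we get that $a^{1/d}$ is a root of $X^e - \alpha \in F[X]$ and that $L := K(\zeta_n, a^{1/d}) = F(a^{1/d})$. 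Hence it suffices to prove that $X^e - \alpha$ is irreducible over $F$, and for this I would apply Theorem~\ref{ThmLangIrr}.

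For a prime $l \mid e$, suppose for contradiction $\alpha = c^l$ with $c \in F^\times$. Extending the monic-part notation to $F = \mathbb{F}_{n,d}(T)$ and comparing monic parts yields $\Tilde{b}^{\,k} = \Tilde{c}^{\,l}$; since $(k,l) = 1$, a B\'ezout combination gives $\Tilde{b} \in (F^\times)^l$. The crux is then a descent from $\mathbb{F}_{n,d}(T)$ to $K$: since $\mathbb{F}_{n,d}/\mathbb{F}_q$ is separable, every monic irreducible $P\mid\Tilde{b}$ factors into \emph{distinct} monic irreducibles over $\mathbb{F}_{n,d}$, so unique factorization in $\mathbb{F}_{n,d}[T]$ shows that $\Tilde{b} \in (\mathbb{F}_{n,d}(T)^\times)^l$ forces $l$ to divide every exponent in the factorization of $\Tilde{b}$ over $\mathbb{F}_q$, i.e. $\Tilde{b} \in (K^\times)^l$ --- contradicting the choice of $\Tilde{b}$. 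For the case $4 \mid e$ (which forces $p$ odd, since $p \nmid d$), if $\alpha \in -4(F^\times)^4$ then $\alpha$ would be a square in $F$ whenever $-1$ is; combined with the case $l = 2$ just handled, $-1$ must be a non-square in $F$, hence in the finite field $\mathbb{F}_{n,d}$, forcing $4 \nmid \#\mathbb{F}_{n,d} - 1$. But $\zeta_d \in \mathbb{F}_{n,d}$ gives $d \mid \#\mathbb{F}_{n,d} - 1$, and $4 \mid e \mid d$, a contradiction.

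The main obstacle is this descent step: one must check that passing to monic parts over the larger field $\mathbb{F}_{n,d}(T)$ is harmless and that separability of the residue extension prevents multiplicities from being lowered --- this is precisely the function-field subtlety absent from the number-field analogue, and it is exactly where the hypothesis $\Tilde{b} \notin (K^\times)^l$ (equivalently, the maximality of $h$) enters. Once $X^e - \alpha$ is shown irreducible over $F$, we conclude $[K(\zeta_n,a^{1/d}):\mathbb{F}_{n,d}K] = [F(a^{1/d}):F] = e = d/(d,h)$.
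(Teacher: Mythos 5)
Your proof is correct, and it shares the paper's architecture: identify $\mathbb{F}_{n,d}K$ with $K(\zeta_n,a^{1/D})$, $D=(d,h)$, via Lemma~\ref{LemmaConstantsFieldKnd}, realize $K(\zeta_n,a^{1/d})$ as generated by a root of a binomial of degree $d/D$ over that field, and check the hypotheses of Theorem~\ref{ThmLangIrr}, with the $4\mid d/D$ case handled exactly as in Lemma~\ref{degreKummerNonGeo}. Where you genuinely diverge is in verifying the crucial condition that $\alpha=\lambda^{1/D}\Tilde{b}^{\,k}$ is not an $l$-th power in $\mathbb{F}_{n,d}K$. An $l$-th root of $\alpha$ exists there if and only if \emph{both} the constant part $\lambda^{1/D}$ is an $l$-th power in $\mathbb{F}_{n,d}^\times$ \emph{and} the monic part $\Tilde{b}^{\,k}$ is an $l$-th power in $\mathbb{F}_{n,d}(T)$; the paper extracts its contradiction from the first factor (the leading coefficient $\lambda$, via a maximality property of $D$ as an exponent of $\lambda$ that it attributes to Lemma~\ref{LemmaConstantsFieldKnd}), whereas you extract it from the second, using $(k,l)=1$, a B\'ezout combination, and a descent from $\mathbb{F}_{n,d}[T]$ to $\mathbb{F}_q[T]$ to contradict the maximality of $h$. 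Your route is the more robust one: it relies only on the defining property of $h$, unique factorization, and separability of constant field extensions (your descent step is exactly the point requiring care, and you justify it correctly), and it still works when the leading-coefficient obstruction is vacuous --- for instance when $\lambda=1$, or more generally when $\lambda$ is an $lD$-th power in $\mathbb{F}_{n,d}^\times$, a case the constant-part argument cannot rule out. In short: correct, same overall strategy, but a different and arguably cleaner verification of the key non-$l$-th-power condition.
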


\begin{proof}
    Put $d_0=d/(d,h)$ and write $a=b^D$, where $D=(d,h)$ and $b\in \mathbb{F}_{n,d}K^\times$. The latter is possible since $a=\lambda \Tilde{c}^D$ for some $c \in K^\times$, by Lemma \ref{LemmaConstantsFieldKnd} and Theorem \ref{iffgeoTHM}. Thus $a=(\mu \Tilde{c})^D$ with $\mu \in \mathbb{F}_{n,d}$ and $\mu^D=\lambda$. Using Theorem \ref{ThmLangIrr}, we show that the polynomial $X^{d_0}-b$ is irreducible over $\mathbb{F}_{n,d}K^\times$. Let $l\mid d_0$ be a prime and assume by contradiction that $b=x^l$ for some $x\in \mathbb{F}_{n,d}K^\times$. Since $\mathbb{F}_{n,d}K$ is a rational function field and because $a\in A$, we find that $x\in \mathbb{F}_{n,d}[T]$. Then $\lambda= x_0^{lD}$, where $x_0\in \mathbb{F}_{n,d}$ is the leading coefficient of $x$. However, by Lemma \ref{LemmaConstantsFieldKnd}, $D$ is the greatest positive integer $t\mid d$ such that $\lambda$ is a $t$-th power in $\mathbb{F}_{n,d}^\times$. Hence we have a contradiction and $b\not \in (\mathbb{F}_{n,d}K^\times)^l$. If $4\mid d_0$, then the rest of the proof follows in the same way as the proof of Lemma \ref{degreKummerNonGeo}.
\end{proof}


\section{The proportion-density}\label{SectionProportion}

Throughout this paper, we denote by $\mathcal{P}_+$ the set of monic irreducible polynomials in $A$, and by $R_q(a,d)$ the set of $P\in \mathcal{P}_+$ that satisfy $v_P(a)=0$ and $d\mid \mathrm{ord}_P(a)$, where $a\in K^\times$ and $d\geq 1$ are fixed. For each $N\geq 1$, we consider the number of primes in $R_q(a,d)$ with degree $N$, denoted $R_q(a,d,N)$. We write $I_N=\mathcal{P}_+(N)$.

For $d=1$, we trivially have $R_q(a,1,N)=I_N-a_N$, where $a_N$ is the number of $P$ of degree $N$ such that $v_P(a)\ne 0$. Note that $a_N\ne 0$ for only finitely many $N$. Moreover, if $a=\lambda \in \mathbb{F}_q^\times$, then
\[
R_q(\lambda,d,N) =
    \begin{cases}
        I_N, & \text{if } d\mid \mathrm{ord}_{\mathbb{F}_q^\times}(\lambda); \\
        0, & \text{otherwise}.
    \end{cases}
\]
Densities are easily computed here. We have $d_1(R_q(a,1))=1$ and it follows that the $d_i$'s and the Dirichlet densities also exist and equal $1$. The same goes for $R_q(a,\lambda)$ for which the densities are $1$ or $0$, whether $d\mid \mathrm{ord}_{\mathbb{F}_q^\times}(\lambda)$.

We assume that $d\geq 2$ and $a$ is not a constant for the rest of this paper. Since $f:=\mathrm{ord}_d(q)\nmid N$ implies that $R_q(a,d,N)=0$, we only consider integers $N\equiv 0\pmod{f}$. We put
\[
e_N(d)=e_N:=\bigg( \frac{q^N -1}{d}, d^\infty \bigg).
\]
Since $K$ is fixed, we denote by $\{L\}$ the set of primes in $K$ that splits completely in $L$, where $L/K$ is an algebraic extension. For each $N\geq 1$, we denote the number of primes in $\{L\}$ with degree $N$ by $\{L\}_N$. We assume throughout the paper that $a=\lambda\Tilde{a}$ and $\Tilde{a} = b^h$, with the notation of the previous section.

\begin{lemma}\label{keyIddegN}
    For each positive $N\equiv 0 \pmod{f}$, we have
    \[
    R_q(a,d,N) = \sum_{v\mid e_N } \sum_{u\mid d} \mu(u)\{ K_{dv,uv} \}_N,
    \]
\end{lemma}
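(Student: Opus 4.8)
The goal is to decompose $R_q(a,d,N)$ as a signed sum of completely-split-prime counts in Kummer extensions $K_{dv,uv} := K(\zeta_{dv}, a^{1/(uv)})$. The natural starting point is the characterization of membership in $R_q(a,d)$: a prime $P$ of degree $N$ (with $f\mid N$, so that $NP\equiv 1\pmod d$ is automatic) lies in $R_q(a,d)$ iff $d\mid\mathrm{ord}_P(a)$, iff $a^{(NP-1)/d}\not\equiv 1\pmod P$. So I would first write
\[
R_q(a,d,N) = I_N' - \#\{P: \deg P = N,\ v_P(a)=0,\ a^{(NP-1)/d}\equiv 1\pmod P\},
\]
where $I_N'$ is $I_N$ minus the finitely many bad primes, and then recognize, via Theorem~\ref{splitting}, that the subtracted count is exactly $\{K_{d,1}\}_N = \{K(\zeta_d,a^{1/d})\}_N$ when $f\mid N$ (since then $NP\equiv1\pmod d$ holds for all degree-$N$ primes). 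This gives the $d=1$-flavored identity that the whole sum must reduce to; I would keep $v$ and $u$ as bookkeeping for a finer stratification.

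**The main decomposition.**
The real content is to stratify the primes $P\in R_q(a,d,N)$ according to which power of each prime $l\mid d$ actually divides $\mathrm{ord}_P(a)$, or equivalently according to the index of $a$ in $(A/P)^\times$ localized at primes dividing $d$. Concretely, for $P$ of degree $N$ write $g_P := \mathrm{ind}_P(a) = (NP-1)/\mathrm{ord}_P(a)$; then $d\mid\mathrm{ord}_P(a)$ iff $d\mid (NP-1)/(g_P, \text{stuff})$, which one repackages as a condition on $(g_P, d^\infty)$ relative to $((NP-1)/d, d^\infty) $. The quantity $e_N = ((q^N-1)/d, d^\infty)$ enters precisely because $NP-1 = q^N-1$ for $\deg P=N$, so the relevant "room" for the $d$-part of the index is bounded by $e_N$. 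I would set, for each divisor $v\mid e_N$, the event $A_v = \{P : \deg P = N,\ v_P(a)=0,\ (\mathrm{ind}_P(a), d^\infty) = v\cdot(\text{something})\}$ — more precisely the event that $P$ splits completely in $K(\zeta_{dv}, a^{1/(dv)})$ but with the exact-index refinement — and then the indicator of $R_q(a,d)$ restricted to degree $N$ is a sum over $v\mid e_N$ of the indicator that $P$ splits in $K_{dv,dv}$ but not in $K_{dv l, dvl}$ for any prime $l$; inclusion–exclusion over $u\mid d$ (supported on squarefree $u$, hence the Möbius $\mu(u)$) then converts "splits in $K_{dv,dv}$ but in no larger one in the $d$-direction" into $\sum_{u\mid d}\mu(u)\{K_{dv,uv}\}_N$.

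**Executing the inclusion–exclusion.**
Here is the clean way I would organize it. By Theorem~\ref{splitting}, for $\deg P = N$ with $f\mid N$, $P$ splits completely in $K_{m, k} = K(\zeta_m, a^{1/k})$ (for $k\mid m$, $m\mid q^N-1$) iff $a^{(q^N-1)/k}\equiv 1\pmod P$, i.e. iff $k$ divides $\mathrm{ord}_P(a)\cdot(\text{unit})$... precisely iff $\mathrm{ord}_P(a) \mid (q^N-1)/k$ is false in the right sense — one wants: $P$ splits in $K_{m,k}$ iff $(q^N-1)/k$ is a multiple of... let me just say: iff the $P$-part condition $a^{(q^N-1)/k}\equiv1$ holds, which depends only on $k$ through $(k, \mathrm{ind}_P(a)\cdot d^\infty)$-type gcd's. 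The key combinatorial identity is the standard one: for a fixed prime $P$, writing $j = (\mathrm{ind}_P(a), d^\infty)$ and observing $d\mid\mathrm{ord}_P(a)$ iff $d\mid (q^N-1)/j$ iff $j\mid (q^N-1)/d$ (using $d\mid q^N-1$) iff $j\mid e_N$ (since $j$ is supported on primes dividing $d$),
\[
\mathbf{1}[d\mid\mathrm{ord}_P(a)] = \sum_{v\mid e_N} \mathbf{1}\big[(\mathrm{ind}_P(a), d^\infty) = v\big] = \sum_{v\mid e_N}\sum_{u\mid d}\mu(u)\,\mathbf{1}\big[v u \mid \mathrm{ind}_P(a)\big],
\]
the last step being Möbius inversion on the divisor lattice of $d^\infty$ truncated at $d$ (only squarefree $u$ survive). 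Finally $\mathbf{1}[vu\mid\mathrm{ind}_P(a)] = \mathbf{1}[a^{(q^N-1)/(vu)}\equiv1\pmod P] = \mathbf{1}[P\in\{K_{dv,uv}\}]$ by Theorem~\ref{splitting}, provided one checks $dv\mid q^N-1$ (which holds since $v\mid e_N\mid q^N-1$ and $d\mid q^N-1$, and $dv\mid q^N-1$ needs $\mathrm{lcm}$-type care: in fact $dv\mid q^N-1$ because $v\mid(q^N-1)/d$). Summing over $P$ of degree $N$ gives the claimed formula.

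**Where the difficulty lies.**
The routine parts are Theorem~\ref{splitting} and the Möbius inversion. The delicate point — the one I would spend the most care on — is the bookkeeping that turns "$d\mid\mathrm{ord}_P(a)$" into a clean divisibility "$j\mid e_N$" for $j=(\mathrm{ind}_P(a),d^\infty)$, and then ensuring that the truncation at $u\mid d$ (rather than $u\mid d^\infty$) is legitimate: one must check that $vu\mid\mathrm{ind}_P(a)$, for $u$ ranging over divisors of $d$, already detects the full $d^\infty$-part condition, which relies on the fact that $(\mathrm{ind}_P(a), d^\infty)=v$ forces no prime $l\mid d$ to divide $\mathrm{ind}_P(a)/v$-compatibly — i.e. that the "next level up" in each $l$-direction is already captured by multiplying $v$ by $l\mid d$. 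This is where the supernatural-number gcd notation $(\cdot,d^\infty)$ does real work, and where an off-by-one in valuations would break the formula; I would verify it prime-by-prime on the $l$-adic valuations for each $l\mid d$. A secondary point is confirming $dv\mid q^N-1$ so that $K_{dv,uv}$ is a legitimate Kummer extension to which Theorem~\ref{splitting} applies with modulus $dv$ — this follows from $v\mid e_N$ but deserves an explicit line.
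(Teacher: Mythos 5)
Your proof is correct and is essentially the paper's argument: the paper simply defers to the proof of Moree's Proposition~1, which is exactly your stratification by $(\mathrm{ind}_P(a),d^\infty)=v$ for $v\mid e_N$ followed by M\"obius inclusion--exclusion over squarefree $u\mid d$ and the translation $uv\mid\mathrm{ind}_P(a)\iff P\in\{K_{dv,uv}\}$ via Theorem~\ref{splitting}. The displayed identity chain in your third paragraph, together with the checks that $dv\mid q^N-1$ and that truncating at $u\mid d$ suffices, is the whole content.
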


\begin{proof}
    We follow the proof of \cite[Proposition 1]{Moree}. Note in particular that the condition $p\leq x$ must be replaced by $\deg(P)=N$, and $p\equiv 1 \pmod{dv}$ by the condition $q^N\equiv 1 \pmod{dv}$.
\end{proof}

\begin{proposition}\label{boundgenus}
    There exists $c_0>0$, that only depends on $a$, such that
    \[
    g_{K_{n,d}} \leq c_0 \cdot [K_{n,d}:\mathbb{F}_{n,d} K].
    \]
\end{proposition}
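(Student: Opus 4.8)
The plan is to bound the genus of $K_{n,d}=K(\zeta_n,a^{1/d})$ using the Riemann–Hurwitz formula applied to the extension $K_{n,d}/K$, and to exploit the fact that ramification in this extension is confined to a fixed, $a$-dependent set of places of $K$ while the ramification indices are bounded by the relevant local degrees. First I would reduce to the geometric part of the extension: write $K_{n,d}=\mathbb{F}_{n,d}K(a^{1/d})$, so that over the constant field $\mathbb{F}_{n,d}$ the extension $K_{n,d}/\mathbb{F}_{n,d}K$ has degree $m:=[K_{n,d}:\mathbb{F}_{n,d}K]$, which by Theorem \ref{degreeKummerTHM} equals $d/(d,h)$ and in particular divides $d$. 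Since $\mathbb{F}_{n,d}K$ has genus $0$ (it is a rational function field), Riemann–Hurwitz for the separable extension $K_{n,d}/\mathbb{F}_{n,d}K$ gives
\[
2g_{K_{n,d}}-2 = m(2\cdot 0-2) + \deg(\mathfrak{D}),
\]
where $\mathfrak{D}$ is the different; hence it suffices to bound $\deg(\mathfrak{D})$ linearly in $m$ with an $a$-dependent constant.

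Next I would locate the ramified places. The extension $\mathbb{F}_{n,d}K(a^{1/d})/\mathbb{F}_{n,d}K$ is a Kummer extension (the constant field $\mathbb{F}_{n,d}$ contains the $d_0$-th roots of unity with $d_0=m$, since $d_0\mid n$ and $\mathbb{F}_{n,d}\supseteq \mathbb{F}_q(\zeta_n)$), generated by a root of $X^{d_0}-b$ with $b=\mu\tilde c^{(d,h)}$ as in the proof of Theorem \ref{degreeKummerTHM}. A place $\mathfrak{p}$ of $\mathbb{F}_{n,d}K$ ramifies only if $d_0\nmid v_{\mathfrak{p}}(b)$, i.e. only if $\mathfrak{p}$ lies over a zero or pole of $a$ in $K$, or over the place at infinity of $K$. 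The number of such places of $K$ is at most $1+\deg(\text{numerator of }a)+\deg(\text{denominator of }a)=:c_1$, a constant depending only on $a$; each such place of $K$ has at most $m$ places of $\mathbb{F}_{n,d}K$ above it (counting without multiplicity, since $\mathbb{F}_{n,d}K/K$ has degree dividing $\mathrm{ord}_n(q)\cdot(\text{stuff})$... — more robustly: the places of $\mathbb{F}_{n,d}K$ above a given place of $K$ number at most $[\mathbb{F}_{n,d}K:K]$, but it is cleaner to work entirely over $\mathbb{F}_{n,d}K$ and simply note that the set of ramified places of $\mathbb{F}_{n,d}K$ lying over these finitely many places of $K$ has bounded size relative to $m$). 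For tame ramification at a place $\mathfrak{p}$ of $\mathbb{F}_{n,d}K$ of ramification index $e_{\mathfrak{p}}$, the local different exponent is $e_{\mathfrak{p}}-1<e_{\mathfrak{p}}\le d_0=m$; for wild ramification, which can only occur at places of residue characteristic... — but in a Kummer extension of degree $d_0$ with $p\nmid n$ (hence $p\nmid d_0$), ramification is automatically tame. So every local different exponent is at most $m-1$.

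Putting this together, $\deg(\mathfrak{D}) = \sum_{\mathfrak{p}} d_{\mathfrak{p}}(\mathfrak{P}|\mathfrak{p})\deg(\mathfrak{P})$ where the sum is over the ramified places $\mathfrak{P}$ of $K_{n,d}$; grouping by the place $\mathfrak{p}$ of $\mathbb{F}_{n,d}K$ below and then by the place of $K$, and using $\sum_{\mathfrak{P}|\mathfrak{p}} e(\mathfrak{P}|\mathfrak{p})f(\mathfrak{P}|\mathfrak{p})\deg(\mathfrak{p})=m\deg(\mathfrak{p})$ together with $d_{\mathfrak{p}}\le e(\mathfrak{P}|\mathfrak{p})-1$, one gets $\deg(\mathfrak{D}) \le \sum_{\mathfrak{p}\text{ ram.}} m\deg(\mathfrak{p}) \le m\cdot\big(\text{total degree of the bad places of }\mathbb{F}_{n,d}K\big)$, and the latter total degree is bounded by the total degree of the bad places of $K$ (zeros, poles of $a$, and infinity), which is an $a$-dependent constant $c_1$. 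Hence $2g_{K_{n,d}}-2 \le -2m + c_1 m$, giving $g_{K_{n,d}} \le \tfrac{1}{2}(c_1-2)m + 1 \le c_0\, m$ for a suitable $c_0=c_0(a)$, as claimed.

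The main obstacle I anticipate is the bookkeeping when passing through the tower $K\subset \mathbb{F}_{n,d}K \subset K_{n,d}$: the constant field extension $\mathbb{F}_{n,d}K/K$ is unramified at all finite places but its degree grows with $n$, so one must be careful to phrase the different computation for $K_{n,d}/\mathbb{F}_{n,d}K$ (whose degree $m$ is bounded independently of $n$, and is exactly what appears in the statement) rather than for $K_{n,d}/K$. The clean way is to apply Riemann–Hurwitz directly to $K_{n,d}/\mathbb{F}_{n,d}K$, use that its base has genus $0$, observe $p\nmid m$ so ramification is tame, and bound the number (with degrees) of ramified places by pulling back the zeros/poles of $a$ together with the infinite place of $K$ — a set whose total degree over $\mathbb{F}_{n,d}K$ is at most the total degree over $K$, independent of $n$.
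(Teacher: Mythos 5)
Your argument is correct and is in substance the same as the paper's: the paper simply cites the genus formula for Kummer extensions (\cite[Proposition 3.7.3]{Sti}) applied to $K_{n,d}/\mathbb{F}_{n,d}K$, and that citation encapsulates exactly the Riemann--Hurwitz computation you carry out by hand (tame ramification since $p\nmid d_0$, ramified places confined to the zeros and poles of $a$ and the place at infinity, whose total degree is an $a$-dependent constant). Your own parenthetical correction --- bounding the \emph{total degree} of the bad places of $\mathbb{F}_{n,d}K$ rather than counting places above each bad place of $K$ --- is the right fix, since that total degree is preserved under constant field extensions while the number of places is not.
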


\begin{proof}
    It suffices to apply \cite[Proposition 3.7.3]{Sti} to $K_{n,d}$.
\end{proof}

\begin{lemma}\label{LemmaAppCheboKnd}
    There exists an absolute constant $c_1>0$ such that for each $N\geq 1$ such that $[\mathbb{F}_{n,d}:\mathbb{F}_q]\mid N$, we have
    \[
    \bigg|\{ K_{n,d} \}_N - \frac{q^N}{N}\frac{1}{[K_{n,d}:\mathbb{F}_{n,d}K]} \bigg| \leq 2c_1 \cdot \frac{q^{N/2}}{N}.
    \]
\end{lemma}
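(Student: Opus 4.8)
The plan is to apply the Chebotarev Density Theorem (Theorem \ref{Chebo}) to the Kummer extension $L = K_{n,d} = K(\zeta_n, a^{1/d})$ and then absorb all the lower-order terms into a single clean error term. First I would set $L = K_{n,d}$, let $\mathbb{F}_{n,d}$ be its constant field, write $m = [K_{n,d} : \mathbb{F}_{n,d}K]$, and observe that $\{K_{n,d}\}_N = \pi(N)$ in the notation of Theorem \ref{Chebo}. The hypothesis $[\mathbb{F}_{n,d} : \mathbb{F}_q] \mid N$ is exactly the condition ``$n \mid N$'' in that theorem (with $n$ there being $[\mathbb{F}_{n,d}:\mathbb{F}_q]$), so Theorem \ref{Chebo} applies and gives
\[
\bigg| \{K_{n,d}\}_N - \frac{q^N}{Nm} \bigg| \leq \frac{2}{Nm}\bigg( (m + g_{K_{n,d}})q^{N/2} + m q^{N/4} + g_{K_{n,d}} + m \bigg).
\]

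Next I would bound the right-hand side. By Proposition \ref{boundgenus}, there is a constant $c_0 > 0$ depending only on $a$ with $g_{K_{n,d}} \leq c_0 m$. Substituting this, the bracket is at most $(m + c_0 m)q^{N/2} + mq^{N/4} + c_0 m + m = m\big((1+c_0)q^{N/2} + q^{N/4} + c_0 + 1\big)$, so the whole right-hand side is at most $\frac{2}{N}\big((1+c_0)q^{N/2} + q^{N/4} + c_0 + 1\big)$; crucially the factor $m$ cancels. Since $q \geq 2$, we have $q^{N/4} \leq q^{N/2}$ and $c_0 + 1 \leq (c_0+1)q^{N/2}$, so the bracket is at most $\big(2(c_0+1) + 1\big)q^{N/2} = (2c_0 + 3)q^{N/2}$. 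Hence the error is bounded by $\frac{2(2c_0+3)}{N}\, q^{N/2}$, and one may take $c_1 = 2c_0 + 3$ (or any convenient absolute-in-$n$ constant depending only on $a$), giving the claimed inequality with constant $2c_1$.

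The only subtlety — and the point I would flag as the main thing to get right rather than a genuine obstacle — is the precise meaning of ``absolute constant'' in the statement: the constant $c_1$ does depend on $a$ through $c_0$, but it is independent of $n$, $d$, and $N$, which is what matters for the later summation arguments over $n$ and $N$. I would state this explicitly. One should also note the harmless edge case where $m$ or $N$ might cause division issues: $m \geq 1$ always since it is a field degree, and $N \geq 1$ by assumption, so the bound $\frac{2}{Nm}(\cdots)$ from Theorem \ref{Chebo} and all subsequent manipulations are valid. With these remarks in place the proof is essentially just the substitution $g_{K_{n,d}} \leq c_0 m$ into Theorem \ref{Chebo} followed by the cancellation of $m$ and crude bounding of $q^{N/4}$ and constants by $q^{N/2}$.
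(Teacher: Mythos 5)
Your proof is correct and follows essentially the same route as the paper: apply Theorem \ref{Chebo} to $K_{n,d}$, bound the genus by $c_0\,[K_{n,d}:\mathbb{F}_{n,d}K]$ via Proposition \ref{boundgenus} so that the degree $m$ cancels, and absorb the $q^{N/4}$ and constant terms into $q^{N/2}/N$. Your remark that $c_1$ depends on $a$ (through $c_0$) but not on $n$, $d$, or $N$ is a fair and worthwhile clarification of the word ``absolute'' in the statement.
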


\begin{proof}
    We apply Theorem \ref{Chebo} to $\{K_{n,d}\}_N$. The error term is obtained using Proposition \ref{boundgenus} and the bounds
    \[
    \frac{1}{N} \leq \frac{1}{\sqrt{q}} \frac{q^{N/2}}{N} \quad \text{and} \quad \frac{q^{N/4}}{N} \leq \frac{1}{\sqrt[4]{q}} \frac{q^{N/2}}{N},
    \]
    valid for all $N\geq 1$.
\end{proof}

Let $\mathcal{P}$ be a proposition. Throughout the rest of the paper, we use the Iverson symbol defined by $[\mathcal{P}]=1$ if $\mathcal{P}$ is true, and $[\mathcal{P}]=0$ otherwise. For integers $v\mid d^\infty$ and $u\mid d$, we let $f_{u,v}=[\mathbb{F}_{dv,uv}:\mathbb{F}_q]$, that is,
\begin{equation}\label{formula_f_uv}
    f_{u,v} = \frac{\mathrm{ord}_{dv}(q)(uv,h)}{(\mathrm{ind}_{\mathbb{F}_q(\zeta_{dv})^\times}(\lambda), uv,h)},
\end{equation}
by Lemmas \ref{degreKummerNonGeo} and \ref{LemmaConstantsFieldKnd}.

\begin{theorem}\label{MainTHMProportion}
    For each positive $N\equiv 0\pmod{f}$, we have
    \[
    \bigg| R_q(a,d,N) - \frac{q^N}{N} \cdot \delta_q(a,d,N) \bigg| \leq 2^{\omega(d)+1}c_1 \cdot \frac{\tau(e_N)q^{N/2}}{N},
    \]
    where $c_1$ is the constant defined in Lemma \ref{LemmaAppCheboKnd} and $\delta_q(a,d,N)$ is the proportion-density defined by
    \[
    \delta_q(a,d,N) = \sum_{v\mid e_N} \sum_{u\mid d} \frac{\mu(u)[f_{u,v}\mid N]}{[K_{dv,uv}:\mathbb{F}_{dv,uv}K]}.
    \]
\end{theorem}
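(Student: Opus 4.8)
The plan is to combine Lemma~\ref{keyIddegN}, which expresses $R_q(a,d,N)$ as a double sum of split-prime counts $\{K_{dv,uv}\}_N$, with the effective Chebotarev estimate of Lemma~\ref{LemmaAppCheboKnd} applied to each $K_{dv,uv}$, and then collect the error terms. First I would start from the identity
\[
R_q(a,d,N) = \sum_{v\mid e_N } \sum_{u\mid d} \mu(u)\{ K_{dv,uv} \}_N
\]
of Lemma~\ref{keyIddegN}, valid for $N\equiv 0\pmod f$. For each pair $(u,v)$ with $v\mid e_N$ and $u\mid d$, I distinguish two cases according to whether $f_{u,v}=[\mathbb{F}_{dv,uv}:\mathbb{F}_q]$ divides $N$. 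If $f_{u,v}\nmid N$, then by Theorem~\ref{Chebo} (the $n\nmid N$ case) we have $\{K_{dv,uv}\}_N=0$, which matches the term $[f_{u,v}\mid N]=0$ in $\delta_q(a,d,N)$, so these pairs contribute nothing to either side and no error. If $f_{u,v}\mid N$, then Lemma~\ref{LemmaAppCheboKnd} gives
\[
\Bigl|\{K_{dv,uv}\}_N - \frac{q^N}{N}\,\frac{1}{[K_{dv,uv}:\mathbb{F}_{dv,uv}K]}\Bigr| \leq 2c_1\,\frac{q^{N/2}}{N},
\]
and here the main term is exactly $\frac{q^N}{N}\cdot\frac{[f_{u,v}\mid N]}{[K_{dv,uv}:\mathbb{F}_{dv,uv}K]}$, i.e.\ the $(u,v)$-summand of $\frac{q^N}{N}\delta_q(a,d,N)$.

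Next I would subtract $\frac{q^N}{N}\delta_q(a,d,N)$ from $R_q(a,d,N)$ term by term, use $|\mu(u)|\leq 1$, and apply the triangle inequality over all pairs $(u,v)$ with $v\mid e_N$, $u\mid d$, and $f_{u,v}\mid N$. Each such pair contributes at most $2c_1\,q^{N/2}/N$. The number of divisors $u$ of $d$ with $\mu(u)\ne 0$ is $2^{\omega(d)}$ (squarefree divisors only — the others drop out of both sums), and the number of divisors $v$ of $e_N$ is $\tau(e_N)$, so the total count of relevant pairs is at most $2^{\omega(d)}\tau(e_N)$. Multiplying, the accumulated error is at most $2^{\omega(d)}\tau(e_N)\cdot 2c_1\,q^{N/2}/N = 2^{\omega(d)+1}c_1\,\tau(e_N)q^{N/2}/N$, which is precisely the claimed bound. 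I would also note that $f_{u,v}$ is given by the explicit formula \eqref{formula_f_uv}, so $\delta_q(a,d,N)$ as written is well-defined.

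I do not expect any serious obstacle here: the statement is essentially a bookkeeping consolidation of the two preceding lemmas, and the only points requiring a little care are (i) making sure the $f_{u,v}\nmid N$ terms really are zero on \emph{both} sides so they can be discarded before summing errors, and (ii) counting the relevant $(u,v)$ pairs correctly — in particular restricting to squarefree $u$ via $\mu(u)$, which keeps the factor at $2^{\omega(d)}$ rather than $\tau(d)$. The verification that Lemma~\ref{LemmaAppCheboKnd} applies — namely that its hypothesis $[\mathbb{F}_{dv,uv}:\mathbb{F}_q]\mid N$ holds — is exactly the case assumption $f_{u,v}\mid N$, so nothing extra is needed there.
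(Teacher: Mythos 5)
Your proposal is correct and follows essentially the same route as the paper: combine Lemma~\ref{keyIddegN} with Lemma~\ref{LemmaAppCheboKnd} and sum the $2c_1 q^{N/2}/N$ errors over the $2^{\omega(d)}\tau(e_N)$ pairs $(u,v)$ with $\mu(u)\ne 0$. Your explicit treatment of the pairs with $f_{u,v}\nmid N$ (where $\{K_{dv,uv}\}_N=0$ by Theorem~\ref{Chebo} and the Iverson bracket vanishes) is a detail the paper leaves implicit, but the argument is otherwise identical.
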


\begin{proof}
    We let $S_q(a,d,N):=R_q(a,d,N) - \delta_q(a,d,N)\cdot q^N/N $. By Lemmas \ref{keyIddegN} and \ref{LemmaAppCheboKnd}, we have
    \[
    |S_q(a,d,N)| \leq 2c_1 \cdot \frac{q^{N/2}}{N} \sum_{v\mid e_N} \sum_{u\mid d} |\mu(u)| = 2^{\omega(d)+1}c_1 \cdot \frac{\tau(e_N)q^{N/2}}{N},
    \]
    the sought result.
\end{proof}


\section{More preliminary results}\label{SectionMorePreliminaries}

We show in this section three preliminary results to the study of the $d_1$ and $d_3$-densities of $R_q(a,d)$. Our first result gives basic arithmetic properties of certain numbers of the form $(q^N-1,d^\infty)$. As a consequence, we prove a formula for $\mathrm{ord}_{dv}(q)$ that generalizes a well-known formula for $\mathrm{ord}_{l^k}(q)$, with $k\geq 1$. Finally, we prove that, under the hypothesis $f_{1,v} \mid N$, the Iverson symbol $[f_{u,v}\mid N]$ defines a multiplicative function in the variable $u$. We use the letter $\mathcal{P}$ to denote the proposition
\[
\mathcal{P} : \quad 2\|d, \quad q\equiv 3\pmod{4} \quad \text{and} \quad 2\nmid f.
\]
The following lemma is enough to see that interesting things might happen when $\mathcal{P}$ is true:

\begin{lemma}\label{lemma(dinf)}
    Let $m,n,q\geq 1$ be integers with $(d,q)=1$ and $d\mid q^m-1$. Then
    \[
    (q^{mn}-1,d^\infty) = (q^m-1,d^\infty) (n,d^\infty) \cdot 
    \begin{cases}
        2^{v_2(q^m+1)-1}, & \text{if } [\mathcal{P}]=1 \text{ and } 2\mid n; \\
        1, & \text{otherwise}.
    \end{cases}
    \]
\end{lemma}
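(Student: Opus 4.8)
The plan is to pass to $l$-adic valuations via the identity $(k,d^\infty)=\prod_{l\mid d}l^{v_l(k)}$, so that it suffices to compute $v_l(q^{mn}-1)$ for each prime $l\mid d$ and compare it with $v_l(q^m-1)+v_l(n)$, up to a correction at $l=2$. The tool throughout is the lifting-the-exponent lemma (LTE), which I would either cite or reprove in the few lines needed. For an odd prime $l\mid d$, the hypotheses $d\mid q^m-1$ and $(d,q)=1$ give $l\mid q^m-1$ and $l\nmid q$, so LTE applies to $x=q^m$, $y=1$ and yields $v_l(q^{mn}-1)=v_l((q^m)^n-1)=v_l(q^m-1)+v_l(n)$. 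Hence the odd part of $(q^{mn}-1,d^\infty)$ is exactly $\prod_{l\mid d,\,l\text{ odd}}l^{v_l(q^m-1)+v_l(n)}$, with no correction.

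Next I would treat the prime $2$, which contributes only when $2\mid d$; in that case $q$ must be odd, since otherwise $q^m-1$ would be odd and not divisible by $d$. Set $x=q^m$. If $4\mid x-1$, the $2$-adic form of LTE gives $v_2(q^{mn}-1)=v_2(x-1)+v_2(n)$, again clean. If instead $2\,\|\,x-1$, i.e. $x\equiv 3\pmod 4$, then $d\mid x-1$ with $4\nmid x-1$ forces $2\,\|\,d$, while $q$ odd together with $x\equiv 3\pmod 4$ forces $q\equiv 3\pmod 4$ and $m$ odd, whence $f=\mathrm{ord}_d(q)\mid m$ is odd and $\mathcal{P}$ holds; in this case LTE gives $v_2(q^{mn}-1)=v_2(x-1)+v_2(n)$ for $n$ odd and $v_2(q^{mn}-1)=v_2(x-1)+v_2(x+1)+v_2(n)-1$ for $n$ even, that is, an extra factor $2^{v_2(q^m+1)-1}$ over the clean value. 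Multiplying the $l$-adic contributions then gives $(q^{mn}-1,d^\infty)=(q^m-1,d^\infty)(n,d^\infty)\cdot C$, where $C=2^{v_2(q^m+1)-1}$ precisely when $2\,\|\,d$, $q^m\equiv 3\pmod 4$ and $2\mid n$, and $C=1$ otherwise.

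It remains to check that this $C$ matches the correction term in the statement, i.e. that one may replace the condition "$2\,\|\,d$ and $q^m\equiv 3\pmod 4$" by "$[\mathcal{P}]=1$". One direction was observed above; for the converse, suppose $[\mathcal{P}]=1$ and $2\mid n$. Since $\mathcal{P}$ forces $2\,\|\,d$ and $q$ odd, either $q^m\equiv 3\pmod 4$, which is exactly the case where $C=2^{v_2(q^m+1)-1}$, or $q^m\equiv 1\pmod 4$, in which case $v_2(q^m+1)=1$ and so $2^{v_2(q^m+1)-1}=1=C$; moreover if $[\mathcal{P}]=0$ one is never in the exceptional case. Thus $C$ equals $2^{v_2(q^m+1)-1}$ if $[\mathcal{P}]=1$ and $2\mid n$, and $C=1$ otherwise, which is the claim. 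The one genuinely delicate point — and the step I would write most carefully — is this last reconciliation at $l=2$: the correction in LTE is governed by $q^m\bmod 4$, whereas $\mathcal{P}$ is phrased through $f=\mathrm{ord}_d(q)$ rather than $m$, and the two match only because $q^m\equiv 1\pmod 4$ forces $v_2(q^m+1)=1$, so that the "extra" cases where $\mathcal{P}$ holds but $m$ is even contribute a trivial factor.
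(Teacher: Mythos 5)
Your proof is correct and follows essentially the same route as the paper: reduce to one prime $l\mid d$ at a time via complete multiplicativity of $k\mapsto (k,d^\infty)$ in $d$, compute $v_l(q^{mn}-1)$ by lifting the exponent applied to $x=q^m$, and reconcile the $l=2$ correction with $\mathcal{P}$ by observing that $v_2(q^m+1)=1$ outside the genuinely exceptional case. The only difference is cosmetic: the paper cites Ballot's valuation lemmas (which are LTE in disguise, applied after the same substitutions $q\mapsto q^m$) rather than invoking LTE directly, and your final check that ``$2\,\|\,d$ and $q^m\equiv 3\pmod 4$'' matches $[\mathcal{P}]=1$ up to a trivial factor is exactly the paper's closing remark.
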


\begin{proof}
    The map $d\mapsto (k,d^\infty)$, where $k$ is a fixed integer, defines a completely multiplicative function. Thus, it suffices to prove the result for $(q^{mn}-1,l^\infty)$, where $l\mid d$. By \cite[Lemma 4]{Ba2} and \cite[Proposition 2.4]{Ba1}, and by replacing $\mathrm{ord}_l(q)$ by $m$ and $q$ by $q^m$ respectively in the proofs, which is allowed since it only uses that $l\mid q^m-1$, we obtain
    \[
    v_l\left( \frac{q^{mn}-1}{q^m-1} \right) = v_l(n) +
    \begin{cases}
        2^{v_2(q^m+1)-1}, & \text{if } l=2 \text{ and } 2\mid n; \\
        1, & \text{otherwise}.
    \end{cases}
    \]
    The result follows by noting that if $\mathcal{P}$ is false, then $v_2(q^m+1)=1$.
\end{proof}

\begin{lemma}\label{LemmaFormulaOrder}
    Let $q\geq 1$ be an integer prime to $d$ and $f=\mathrm{ord}_d(q)$. Then, for all $v\mid d^\infty$, we have
    \[
    \mathrm{ord}_{dv}(q) = fdv \cdot 
    \begin{dcases}
        \frac{2}{(q^{2f}-1,dv)}, & \text{if $[\mathcal{P}]=1$ and } 2\mid v; \\
        \frac{1}{(q^f-1,dv)}, & \text{otherwise}.
    \end{dcases}
    \]
\end{lemma}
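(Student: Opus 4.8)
The plan is to reduce the computation of $\mathrm{ord}_{dv}(q)$ to an application of Lemma \ref{lemma(dinf)}. The key observation is the general identity $\mathrm{ord}_{m}(q) = \mathrm{ord}_{m'}(q)\cdot m/m'$ whenever $m' \mid m$, $m'$ and $m$ have the same prime factors, and $\mathrm{ord}_{m'}(q) \mid \mathrm{ord}_{m}(q)$ — more precisely, the right way to phrase this is: if $f = \mathrm{ord}_d(q)$ so that $d \mid q^f - 1$, then $\mathrm{ord}_{dv}(q)$ is the least multiple $M$ of $f$ such that $dv \mid q^M - 1$, since any $M$ with $dv \mid q^M-1$ is automatically a multiple of $\mathrm{ord}_d(q) = f$. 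Writing $M = fn$, the condition becomes $dv \mid q^{fn}-1$, i.e. $v_l(dv) \le v_l(q^{fn}-1)$ for every prime $l \mid dv$ (equivalently $l \mid d$, since $v \mid d^\infty$). So $\mathrm{ord}_{dv}(q) = f n_0$ where $n_0$ is the least $n$ with $(q^{fn}-1, d^\infty) \ge dv$ in the divisibility order, i.e. $dv \mid (q^{fn}-1,d^\infty)$.

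Next I would invoke Lemma \ref{lemma(dinf)} with $m = f$ (legitimate because $d \mid q^f - 1$ by definition of $f$, and $(d,q)=1$): for every $n \ge 1$,
\[
(q^{fn}-1, d^\infty) = (q^f-1,d^\infty)\,(n,d^\infty)\cdot c(n),
\]
where $c(n) = 2^{v_2(q^f+1)-1}$ if $[\mathcal{P}]=1$ and $2 \mid n$, and $c(n)=1$ otherwise. So $n_0$ is the least $n$ with
\[
dv \ \big|\ (q^f-1,d^\infty)\,(n,d^\infty)\,c(n).
\]
Since $k \mapsto (k,d^\infty)$ is completely multiplicative and picks out exactly the $d$-part, this divisibility is a prime-by-prime condition on $n$, and $(n,d^\infty)$ contributes the full $d$-part of $n$. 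Writing $dv = (q^f-1,dv)\cdot w$ where $w = dv/(q^f-1,dv)$ — note $w \mid d^\infty$ and $(w, (q^f-1)) $ has been stripped off — the minimal $n$ is $n_0 = w$ in the generic case, giving $\mathrm{ord}_{dv}(q) = f\,dv/(q^f-1,dv)$ as claimed. In the case $[\mathcal{P}]=1$ and $2 \mid v$ (so $2\mid dv$ and indeed $2 \| d$ forces $v_2(dv) = 1 + v_2(v) \ge 2$), the extra factor $c(n) = 2^{v_2(q^f+1)-1}$ appears precisely when $n$ is even, and one checks $v_2(q^f+1) = v_2(q^{2f}-1) - v_2(q^f-1)$; combining this with the formula for $(q^{2f}-1,d^\infty)$ versus $(q^f-1,d^\infty)$ (apply Lemma \ref{lemma(dinf)} once with $n=2$) shows the minimal admissible $n_0$ equals $2\,dv/(q^{2f}-1,dv)$, which yields the stated second branch.

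The main obstacle is bookkeeping the $2$-adic valuation carefully in the exceptional case: one must verify that under $\mathcal{P}$ the extra power of $2$ is exactly right so that the minimal even $n$ achieving the divisibility is $2\,dv/(q^{2f}-1,dv)$ and not something smaller or larger, and in particular that this value is genuinely even (so that $c(n)$ does apply) and that no odd $n$ can work — the latter because an odd $n$ gives only $v_2$-contribution $v_2(q^f-1) = v_2(q-1) = 1$ (as $q \equiv 3 \pmod 4$), insufficient to cover $v_2(dv) \ge 2$. Once the valuation identities $v_2(q^{2f}-1) = v_2(q^f-1) + v_2(q^f+1)$ and $v_2(q^f-1)=1$ under $\mathcal{P}$ are in hand, the two branches fall out by matching the minimal $n_0$ against $dv$ prime by prime. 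I would also note the consistency check that when $\mathcal{P}$ is false the factor $c(n)$ is identically $1$, so only the generic branch survives, matching the formula.
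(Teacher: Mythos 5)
Your proposal is correct, and it rests on the same key ingredient as the paper's proof, namely Lemma \ref{lemma(dinf)} applied with $m=f$; but the two arguments are organized differently. For the generic case the paper takes $n=dv/(q^f-1,dv)$, observes via Lemma \ref{lemma(dinf)} that $dv\mid q^{fn}-1$ so that $n=tm$ with $t=\mathrm{ord}_{dv}(q)/f$, and then forces $m=1$ by a coprimality argument; you instead characterize $\mathrm{ord}_{dv}(q)$ as $f$ times the least $n$ with $dv\mid (q^f-1,d^\infty)(n,d^\infty)c(n)$ and read off the minimal $n$ prime by prime. The more substantive divergence is in the case $2\,\|\,d$: the paper reduces it to the already-settled case $v_2(d)\ne 1$ by passing to the modulus $D=2d$ (using $\mathrm{ord}_{2n}(q)=\mathrm{ord}_n(q)$ for odd $n$ and $\mathrm{ord}_D(q)=[\mathrm{ord}_4(q),f]$), whereas you handle it head-on by $2$-adic bookkeeping, using $v_2(q^{2f}-1)=v_2(q^f-1)+v_2(q^f+1)$ and $v_2(q^f-1)=1$ under $\mathcal{P}$ to match the minimal even $n$ against $2dv/(q^{2f}-1,dv)$. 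Your route is more uniform and makes transparent why the two branches agree at odd primes (an odd $l\mid d$ divides $q^f-1$, hence not $q^f+1$); the paper's reduction avoids the valuation case analysis at the cost of an extra lemma about orders modulo $2n$ and $4$. One small point you should make explicit: when $[\mathcal{P}]=1$ but $2\nmid v$, the bonus factor $c(n)$ is still available for even $n$, so you must note that it only relaxes the (already vacuous) constraint at $l=2$ and therefore cannot produce a smaller admissible $n$ than the odd value $dv/(q^f-1,dv)$; this is implicit in your prime-by-prime framework but is worth a sentence.
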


\begin{proof}
    Assume $v_2(d)\ne 1$ and put $n=dv/(q^f-1,dv)$. By Lemma \ref{lemma(dinf)}, we see that $dv \mid (q^{fn}-1,d^\infty)$. Hence $n=tm$, where $m\geq 1$ and $t:=\mathrm{ord}_{dv}(q)/f$. By Lemma \ref{lemma(dinf)}, we have
    \[
    dv\mid (q^{ft}-1,d^\infty) = (q^f-1,d^\infty)\cdot \frac{dv}{(q^f-1,dv)(m,d^\infty)},
    \]
    and we find that $m=(m,d^\infty)$ divides $(q^f-1,d^\infty)/(q^f-1,dv)$. But the latter is coprime to $n$, which yields that $m=1$. Next, assume that $2\, \| \,d$ and note that for any odd integer $n\geq 1$, we have $\mathrm{ord}_{2n}(q) = \mathrm{ord}_n(q)$. Therefore, when $2\nmid v$, we have $\mathrm{ord}_{dv}(q) = \mathrm{ord}_{dv/2}(q)$ and we conclude using what we proved in the above. When $2\mid v$, put $D=2d$ and $u=v/2$ so that
    \[
    \mathrm{ord}_{dv}(q) = \mathrm{ord}_{Du}(q) = \frac{\mathrm{ord}_{D}(q) dv}{(q^{\mathrm{ord}_{D}(q)}-1,dv)},
    \]
    by the above again. We have $\mathrm{ord}_{D}(q)=[\mathrm{ord}_{4}(q),\mathrm{ord}_{d/2}(q)]=[\mathrm{ord}_{4}(q),f]$ because $2^2\|D$, and we conclude using that $\mathrm{ord}_4(q)$ equals $1$ or $2$, whether $q\equiv 1\pmod{4}$ or $q\equiv 3 \pmod{4}$ respectively.
\end{proof}

\begin{remark}\label{remark_fk}
    We know that $f_{u,v} = \mathrm{ord}_{dv}(q) k_0$ for some $k_0\mid d^\infty$ by \eqref{formula_f_uv}. Hence Lemma \ref{LemmaFormulaOrder} shows that $f_{u,v} = fk$ for some $k\mid d^\infty$.
\end{remark}

\begin{lemma}\label{Lemmaf_uMulti}
    Let $N\geq 1$ be such that $f_{1,v} \mid N$. The function $u\mapsto [f_{u,v}\mid N]$ is multiplicative for all $v\mid d^\infty$.
\end{lemma}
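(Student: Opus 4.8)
The plan is to work prime by prime with the explicit formula \eqref{formula_f_uv}. Fix $v \mid d^\infty$ and abbreviate $g = \mathrm{ord}_{dv}(q)$ and $i = \mathrm{ind}_{\mathbb{F}_q(\zeta_{dv})^\times}(\lambda)$; these depend on $v$ but \emph{not} on $u$, since they only involve $dv$ and $\zeta_{dv}$. For every prime $l$,
\[
v_l(f_{u,v}) = v_l(g) + \min\bigl(v_l(u) + v_l(v),\, v_l(h)\bigr) - \min\bigl(v_l(i),\, v_l(u) + v_l(v),\, v_l(h)\bigr),
\]
an expression that, for our fixed $v$, depends only on the single nonnegative integer $v_l(u)$. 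Since $u \mid d$ and $v \mid d^\infty$, for a prime $l \nmid d$ we have $v_l(u) = v_l(v) = 0$, so this collapses to $v_l(f_{u,v}) = v_l(g) = v_l(f_{1,v})$; the hypothesis $f_{1,v} \mid N$ then gives $v_l(f_{u,v}) \le v_l(N)$ for all $l \nmid d$, independently of $u$. Hence
\[
[f_{u,v} \mid N] = \prod_{l \mid d} \bigl[\, v_l(f_{u,v}) \le v_l(N)\, \bigr] = \prod_{l \mid d} \phi_l\bigl(v_l(u)\bigr),
\]
where $\phi_l \colon \mathbb{Z}_{\ge 0} \to \{0,1\}$ sends $a$ to the Iverson symbol obtained by substituting $v_l(u) = a$ in the formula for $v_l(f_{u,v})$.

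Two observations then close the argument. First, $\phi_l(0) = [\, v_l(f_{1,v}) \le v_l(N)\, ] = 1$ for every $l \mid d$, again because $f_{1,v} \mid N$; in particular $[f_{1,v} \mid N] = 1$, so the function takes the value $1$ at $u = 1$. Second, if $u_1, u_2 \mid d$ are coprime, then $u_1 u_2 \mid d$ and for each prime $l$ at least one of $v_l(u_1), v_l(u_2)$ vanishes, so $v_l(u_1 u_2) = v_l(u_1) + v_l(u_2)$ with one summand zero; combined with $\phi_l(0) = 1$ this yields $\phi_l\bigl(v_l(u_1) + v_l(u_2)\bigr) = \phi_l(v_l(u_1))\,\phi_l(v_l(u_2))$ (if $v_l(u_2) = 0$ both sides equal $\phi_l(v_l(u_1))$, and symmetrically). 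Taking the product over $l \mid d$ and using the displayed factorization gives
\[
[f_{u_1 u_2, v} \mid N] = [f_{u_1, v} \mid N]\,[f_{u_2, v} \mid N],
\]
which is the claimed multiplicativity.

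I do not expect a genuine obstacle here; the only thing to be careful about is that $k \mapsto (k, h)$ is merely multiplicative, not completely multiplicative, so one should not try to factor $(uv, h)$ directly but instead argue throughout at the level of $l$-adic valuations, as above — together with keeping straight that $g$ and $i$ are honestly independent of $u$.
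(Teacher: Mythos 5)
Your proof is correct. It rests on the same underlying fact as the paper's proof --- that the $u$-dependence of $f_{u,v}$ decouples prime by prime --- but it packages the argument differently. The paper uses the gcd identity $(uv,h)=(v,h)\bigl(u,\tfrac{h}{(v,h)}\bigr)$ to split $f_{u,v}$ into a $u$-independent factor times a multiplicative function of $u$, and then proves the stronger, $N$-free identity $f_{u_1u_2,v}=[f_{u_1,v},f_{u_2,v}]$ for coprime $u_1,u_2\mid d$, from which the multiplicativity of the Iverson symbol follows for any admissible $N$. You instead work directly with the $l$-adic valuations of the divisibility condition $f_{u,v}\mid N$: you factor $[f_{u,v}\mid N]$ as $\prod_{l\mid d}\phi_l(v_l(u))$, using the hypothesis $f_{1,v}\mid N$ to dispose of the primes $l\nmid d$ (where $v_l(f_{u,v})=v_l(g)=v_l(f_{1,v})$ is independent of $u$), and conclude from $\phi_l(0)=1$ together with the fact that coprimality forces one of $v_l(u_1),v_l(u_2)$ to vanish at each prime. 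Your version avoids manipulating lcms of rational expressions --- the factors in the paper's displayed lcm need not individually be integers, so its factoring-out step is really a valuation argument in disguise --- at the cost of proving only the $N$-dependent statement rather than the reusable identity $f_{u_1u_2,v}=[f_{u_1,v},f_{u_2,v}]$. Both arguments invoke the hypothesis $f_{1,v}\mid N$ exactly where it is genuinely needed, namely to guarantee that the function takes the value $1$ at $u=1$ (and, in your setup, to kill the contribution of primes outside $d$).
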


\begin{proof}
    We want to show that $[f_{u_1u_2,v}\mid N]=[f_{u_1,v}\mid N]\cdot [f_{u_2,v}\mid N]$ for coprime $u_1,u_2\mid d$, or equivalently, $f_{u_1u_2,v}=[f_{u_1,v},f_{u_2,v}]$. Let $r=\mathrm{ord}_{dv}(q)$ and $m=\mathrm{ord}_{\mathbb{F}_q^\times}(\lambda)$. We see that $\mathrm{ind}_{\mathbb{F}_q(\zeta_{dv})^\times}(\lambda) = (q^r-1)/m$ because $\lambda\in \mathbb{F}_q^\times$. Moreover, we note that
    \[
    (uv,h) =(v,h) \left( \frac{uv}{(v,h)}, \frac{h}{(v,h)} \right) = (v,h) \left(u,\frac{h}{(v,h)}\right),
    \]
    for all $u\mid d$. From the above and Lemmas \ref{degreKummerNonGeo} and \ref{LemmaConstantsFieldKnd}, we obtain
    \begin{equation}\label{eqf_uv}
        f_{u,v} = \frac{r(uv,h)}{\left( \frac{q^r-1}{m},uv,h\right)} = \frac{r(v,h)}{(v,H_r)} \cdot \frac{\left( u, \frac{h}{(v,h)} \right)}{\left( u,\frac{H_r}{(v,H_r)} \right)},
    \end{equation}
    for all $u\mid d$, where $H_r= (h,(q^r-1)/m)$. Now, we find
    \[
    [f_{u_1,v},f_{u_2,v}] = \frac{r(v,h)}{(v,H_r)} \cdot \left[ \frac{\left( u_1, \frac{h}{(v,h)} \right)}{\left( u_1,\frac{H_r}{(v,H_r)} \right)}, \frac{\left( u_2, \frac{h}{(v,h)} \right)}{\left( u_2,\frac{H_r}{(v,H_r)} \right)} \right].
    \]
    Since $(u_1,u_2)=1$, the lcm is the product of the two numbers. Moreover, recall that $u\mapsto (u,k)$ is a multiplicative function for all fixed integers $k$. Therefore, using \eqref{eqf_uv}, we find that $ [f_{u_1,v},f_{u_2,v}]=f_{u_1u_2,v}$.
\end{proof}


\section{The main theorems}\label{SectionMainTheorems}

In this section, we prove that the set $R_q(a,d)$ does not have $d_1$-density, and thus no $d_2$-density by equivalence, if $f\geq 2$. However, we show that the $d_3$-density exists and we find a formula for it. From now on, we denote by $\Bar{f}$ the order of $q$ modulo $d(h,d^\infty)$.

\begin{theorem}\label{THMRqNod1d2}
    If $f\geq 2$, then the set $R_q(a,d)$ has no $d_1$-density, nor $d_2$-density.
\end{theorem}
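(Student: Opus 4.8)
The plan is to show that the normalized counting function $R_q(a,d,N)/I_N$ fails to converge, by exhibiting two infinite subsequences of integers $N$ along which it tends to different limits. Recall from Theorem \ref{MainTHMProportion} that
\[
\Big| R_q(a,d,N) - \tfrac{q^N}{N}\,\delta_q(a,d,N) \Big| \ll \tfrac{\tau(e_N)\,q^{N/2}}{N},
\]
and since $\tau(e_N) \ll_d N^{\varepsilon}$ while $I_N \sim q^N/N$, the error is negligible; so the question reduces entirely to whether the \emph{proportion-density} $\delta_q(a,d,N)$ converges as $N\to\infty$ (through multiples of $f$, since $R_q(a,d,N)=0$ otherwise — and this vanishing on the non-multiples of $f$, with $f\geq 2$, is already half the story: along $N\not\equiv 0\pmod f$ the ratio is $0$). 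First I would isolate the obstruction inside $\delta_q(a,d,N)=\sum_{v\mid e_N}\sum_{u\mid d}\frac{\mu(u)[f_{u,v}\mid N]}{[K_{dv,uv}:\mathbb{F}_{dv,uv}K]}$: the summand depends on $N$ through two things, namely $e_N=\big(\tfrac{q^N-1}{d},d^\infty\big)$ (the range of $v$) and the Iverson conditions $[f_{u,v}\mid N]$. By Theorem \ref{degreeKummerTHM} the denominator $[K_{dv,uv}:\mathbb{F}_{dv,uv}K]=uv/(uv,h)$ is independent of $N$, which is what makes the analysis tractable.

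The key point to exploit is Remark \ref{remark_fk}: each $f_{u,v}$ has the form $f\cdot k$ with $k\mid d^\infty$. So $[f_{u,v}\mid N]$ is sensitive to the $l$-adic valuations $v_l(N)$ for $l\mid d$. The strategy is then: take $N$ ranging over multiples of $f$, and compare the behavior of $\delta_q(a,d,N)$ when the $l$-adic part of $N$ (for $l\mid d$) is very large versus when it is as small as possible. Concretely, I would pick a sequence $N_j$ with $v_l(N_j)\to\infty$ for every $l\mid d$ (e.g. $N_j = f\cdot d^{\infty\text{-part}}$ growing), along which every condition $[f_{u,v}\mid N_j]$ holds for every $v$ in the relevant range, so $\delta_q(a,d,N_j)$ attains its "full" value; and a second sequence $M_j$ of multiples of $f$ with $v_l(M_j)$ bounded (say $v_l(M_j)=v_l(f)$) for all $l\mid d$, along which many of the Iverson symbols vanish. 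One must check that $e_N$ is eventually constant (or controlled) along each sequence — by Lemma \ref{lemma(dinf)}, $e_N=\big(\tfrac{q^N-1}{d},d^\infty\big)$ grows with $(N,d^\infty)$, so along $N_j$ the index set of $v$'s is itself growing, whereas along $M_j$ it stabilizes; either way, the contributions from $v$ with large support are suppressed on the second sequence because the corresponding $f_{u,v}$ (of the form $fk$ with $k$ having large $l$-parts) cannot divide $M_j$. Then the two limiting values of $\delta_q(a,d,N)$ differ, and a fortiori so do the limits of $R_q(a,d,N)/I_N$, so no $d_1$-density exists; non-existence of $d_2$-density follows from the equivalence $d_1\iff d_2$ quoted from \cite[Theorem A]{Ba3} in the introduction (if a $d_2$-density existed it would force a $d_1$-density).

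The main obstacle I anticipate is \emph{ruling out a coincidence}: it is conceivable that the Möbius-weighted sum telescopes so that the two candidate limits happen to be equal, which would defeat the argument. To handle this I would look at the dominant term. The $v=1$, $u=1$ term contributes $1$ unconditionally (since $f_{1,1}=\bar f \mid N$ automatically when... — actually $f_{1,1}$ need not divide every multiple of $f$, so care is needed here; one should track $\bar f = \mathrm{ord}_{d(h,d^\infty)}(q)$, introduced right before the statement, precisely for this reason). The cleanest route is probably to compute $\delta_q(a,d,N)$ under the extra hypothesis $\bar f \mid N$ versus its value when $\bar f \nmid N$ but $f\mid N$: when $f\geq 2$ and $f \ne \bar f$ there is genuine room for $N$ with $f\mid N$, $\bar f\nmid N$, and along such $N$ one should see fewer surviving terms. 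If instead $f=\bar f$, one pushes the same dichotomy down to a prime $l\mid d$ with $v_l$ of the relevant $f_{u,v}$ exceeding $v_l(f)$, using Lemma \ref{Lemmaf_uMulti} to factor the Iverson condition prime-by-prime and isolate exactly one prime power that distinguishes the two sequences. Establishing that at least one such distinguishing prime power always exists when $f\geq 2$ — equivalently, that $\delta_q(a,d,N)$ is genuinely non-constant on the multiples of $f$ — is the technical heart of the proof; once that is in hand, combined with the trivial vanishing on non-multiples of $f$, the conclusion is immediate.
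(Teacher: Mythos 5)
Your overall strategy --- exhibit two subsequences along which $R_q(a,d,N)N/q^N$ tends to different limits, one being the non-multiples of $f$ (which exist precisely because $f\geq 2$ and give limit $0$), and reduce everything to the behaviour of $\delta_q(a,d,N)$ via Theorem \ref{MainTHMProportion} --- is exactly the paper's. The gap is in the second half. Once you have the zero subsequence, all you need is \emph{one} subsequence of multiples of $f$ along which $\delta_q(a,d,N)$ is constant and \emph{strictly positive}; you do not need, and should not try to prove, that $\delta_q(a,d,N)$ is ``genuinely non-constant on the multiples of $f$,'' which is a different, harder and unnecessary claim (and your large-versus-small $l$-adic-part comparison never addresses the possibility that both of your candidate limits vanish). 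You flag the real difficulty yourself (``care is needed here'') but never resolve it: the terms with $\mu(u)=-1$ are negative, so nothing in your sketch rules out $\delta_q(a,d,N)=0$ for every $N$, in which case $R_q(a,d)$ would simply have $d_1$-density $0$ and the theorem would be false.

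The paper closes this gap as follows. Take $y_n=\bar f(h,d^\infty)(dn+1)$; since $(dn+1,d)=1$, Lemma \ref{lemma(dinf)} makes $e_{y_n}$ a constant $E$, and Remark \ref{remark_fk} gives $f_{u,v}\mid y_n\iff f_{u,v}\mid\bar f(h,d^\infty)$, so $\delta_q(a,d,y_n)$ is a constant $\delta$. Using Lemma \ref{Lemmaf_uMulti}, the inner sum over $u$ is rewritten as an Euler product $\frac{(v,h)}{v}\prod_{l\mid d}\bigl(1-\frac{(lv,h)}{l(v,h)}\,[f_{l,v}\mid\bar f(h,d^\infty)]\bigr)$, each factor of which is non-negative, so $\delta$ is a sum of non-negative terms; finally the single term at $v=(h,d^\infty)$ is shown to be strictly positive, because there $(v,h)=(lv,h)=v$, so every Euler factor equals $1$ or $1-1/l>0$, and one checks directly that $f_{(h,d^\infty)}\mid\bar f(h,d^\infty)$ so that this term actually occurs. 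This positivity argument is the technical heart of the proof and is what your proposal is missing.
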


\begin{proof}
    It suffices to show that the limit, as $N$ tends to infinity, of
    \begin{equation}\label{limit}
        \frac{R_q(a,d,N)N}{q^N}
    \end{equation}
    does not exist. We proceed to show that \eqref{limit} converges to different limits for distinct subsequences. Let $(x_n)_{n\geq 1}$ be an increasing sequence of integers $N$ not divisible by $f$. By the discussion at the beginning of Section \ref{SectionProportion}, we have $R_q(a,d,x_n)=0$ for all $n\geq 0$. Let $(y_n)_{n\geq 0}$ be the sequence defined by $y_n = \Bar{f}(h,d^\infty)(dn+1)$ for all $n\geq 0$. By Theorem \ref{MainTHMProportion}, we have
    \begin{equation}\label{BoundEq}
        \bigg| \frac{R_q(a,d,y_n)y_n}{q^{y_n}} - \delta_q(a,d,y_n)\bigg| \leq 2^{\omega(d)+1}c_1 \tau(e_{y_n}) q^{-y_n/2},
    \end{equation}
    and by Lemma $\ref{lemma(dinf)}$, we see that $e_{y_n} = e_{\Bar{f}}(h,d^\infty)\nu$, with $\nu\mid 2^\infty$, making $e_{y_n}$ a constant, say $E$, that does not depend on $n$. In particular, $\tau(e_{y_n})$ is also a constant and the right hand side of \eqref{BoundEq} converges to $0$ as $n\to +\infty$. Hence the limit of \eqref{limit}, where we substituted $N$ for $y_n$, is the limit of $\delta_q(a,d,y_n)$. By Theorems \ref{degreeKummerTHM} and \ref{MainTHMProportion},
    \[
    \delta_q(a,d,y_n) =  \sum_{v\mid E} \sum_{u\mid d} \frac{\mu(u)(uv,h)}{uv} \cdot [f_{u,v} \mid \Bar{f}(h,d^\infty)],
    \]
    where we used that $e_{y_n}=E$ and that $f_{u,v}\mid y_n$ if and only if $f_{u,v} \mid \Bar{f}(h,d^\infty)$. The latter follows from Remark \ref{remark_fk}. It follows that $\delta_q(a,d,y_n)$ is a constant that does not depend on $n$, say $\delta$. Since $\mathbb{F}_{dv,v}$ is a subfield of $\mathbb{F}_{dv,uv}$, we find that $f_v:=f_{1,v}$ divides $f_{u,v}$ for all $u\mid d$. We may write
    \[
    \delta= \sum_{\substack{v\mid E \\ f_v\mid \Bar{f}(h,d^\infty)}} \sum_{u\mid d} \frac{\mu(u)(uv,h)}{uv} \cdot [f_{u,v} \mid \Bar{f}(h,d^\infty)].
    \]
    For all $v\mid E$ such that $f_v\mid \Bar{f}(h,d^\infty)$, the function
    \[
    u \mapsto \frac{\mu(u)(uv,h)}{u(v,h)} \cdot [f_{u,v} \mid \Bar{f}(h,d^\infty)]
    \]
    is multiplicative by Lemma \ref{Lemmaf_uMulti}. Therefore, we have
    \[
    \delta = \sum_{\substack{v\mid E \\ f_v\mid \Bar{f}(h,d^\infty)}} \frac{(v,h)}{v}  \prod_{l\mid d} \left( 1 - \frac{(lv,h)}{l(v,h)}\cdot [f_{l,v}\mid \Bar{f}(h,d^\infty)] \right),
    \]
    where we used the Euler product formula. We see that $\delta$ is non-negative as each general term is. Hence it suffices to show that there is one non-zero term. We claim that the term in $v=(h,d^\infty)$ is positive. First, let us show that we have $f_v \mid \Bar{f}(h,d^\infty)$. Note that $(v,h) = v$, so that
    \[
    f_{(h,d^\infty)} = \frac{\mathrm{ord}_{dv}(q) (v,h)}{(\mathrm{ind}_{\mathbb{F}_q(\zeta_{dv})^\times}(\lambda),v,h)} = \frac{\Bar{f}(h,d^\infty)}{(\mathrm{ind}_{\mathbb{F}_q(\zeta_{dv})^\times}(\lambda),h,d^\infty)}.
    \]
    We find that $f_{(h,d^\infty)} \mid \Bar{f}(h,d^\infty)$ and the term in $v=(h,d^\infty)$ appears in the sum. Using again that $(v,h)=v$ and that $(lv,h)=v$, we find the general term to be
    \[
     \frac{(h,d^\infty)}{v}  \prod_{l\mid d} \left( 1 - \frac{[f_{l,v}\mid \Bar{f}(h,d^\infty)] }{l} \right),
    \]
    which is non-zero, whether $f_{l,v}\mid \Bar{f}(h,d^\infty)$ or not. We obtain that \eqref{limit} converges to $0$ and $\delta>0$ for distinct subsequences. Hence, the set $R_q(a,d)$ has no $d_1$-density, and thus no $d_2$-density by \cite[Proposition 1.8]{Ba3}.
\end{proof}

The proof of the existence and the computation of the $d_3$-density of $R_q(a,d)$ requires to partition $\mathbb{N}$ into a countable union of distinct arithmetic progressions, following a method of Ballot \cite{Ba2}. We have
\begin{equation}\label{EqPartition}
    \mathbb{N} = \bigsqcup_{j=1}^{f-1} S_j \sqcup \bigsqcup_{w\mid d^\infty} \bigsqcup_{\substack{\alpha=1 \\ (\alpha,d)=1}}^d A_{w,\alpha},
\end{equation}
where $ S_j=\{ fn+j : n\geq 0 \}$ and $A_{w,\alpha} = \{ fw(\alpha+dn) : n\geq 0  \}$. For $N\in S_j$, we have $\delta_q(a,d,N)=0$ by the discussion at the beginning of Section \ref{SectionProportion}. For $N\in A_{w,\alpha}$, we have $e_N =e_{fw}$ by Lemma \ref{lemma(dinf)}, which only depends on $w$. Moreover, we have $f_{u,v} \mid N$ if and only if $f_{u,v} \mid fw$, by Remark \ref{remark_fk}. We obtain
\begin{equation}\label{EqFormulaDelta_w}
    \delta_q(a,d,N) = \sum_{v\mid e_{fw}} \sum_{u\mid d} \frac{\mu(u)(uv,h)}{uv} \cdot [f_{u,v} \mid fw]
\end{equation}
which is a constant that does not depend on $n$, nor $\alpha$. We denote this quantity by $\delta_w$. Moreover, we denote by $\delta_q(a,d)$ the sum
\begin{equation}\label{delta_q(a,d) first expression}
    \delta_q(a,d) = \frac{\varphi(d)}{df} \sum_{w\mid d^\infty} \frac{\delta_w}{w}.
\end{equation}
We show that $\delta_q(a,d)$ is the $d_3$-density of the set $R_q(a,d)$.

\begin{lemma}\label{BoundsSums}
    There exists an absolute constant $c_2>0$ such that for every $x\geq e^{2\omega(d)}$, we have
    \[
    \sum_{ \subalign{ w&\mid d^\infty \\w&\leq x } } 1 \leq c_2\log(x)^{\omega(d)} \quad \text{and} \quad \sum_{ \subalign{ w&\mid d^\infty \\w&>x } } \frac{1}{w} \leq \frac{2c_2\log{(x)}^{\omega(d)}}{x}.
    \]
\end{lemma}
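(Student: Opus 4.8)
The plan is to reduce both inequalities to a counting estimate on divisors of $d^\infty$, since $w\mid d^\infty$ means $w=\prod_{l\mid d}l^{a_l}$ for nonnegative integers $a_l$, and $w\leq x$ forces each $a_l\leq \log x/\log l\leq \log x/\log 2$. First I would establish the first bound: the number of $w\mid d^\infty$ with $w\leq x$ is at most the number of tuples $(a_l)_{l\mid d}$ with each $a_l\leq \log_2 x$, namely $(\lfloor\log_2 x\rfloor+1)^{\omega(d)}$. For $x\geq e^{2\omega(d)}$ one has $\log x\geq 2\omega(d)\geq 2$, so $\log_2 x+1=\log x/\log 2+1\leq 2\log x/\log 2\leq 3\log x$, giving the bound with $c_2=3^{\omega(d)}$; but since we want $c_2$ \emph{absolute} (independent of $d$), I would instead absorb the $\omega(d)$-dependence differently — note $(\log_2 x+1)^{\omega(d)}\leq (C\log x)^{\omega(d)}$ for an absolute $C$ when $\log x\geq 1$, and then rewrite $(C\log x)^{\omega(d)}=C^{\omega(d)}(\log x)^{\omega(d)}$; the factor $C^{\omega(d)}$ is not absolute, so to get a genuinely absolute $c_2$ one must use the hypothesis $x\geq e^{2\omega(d)}$ to beat $C^{\omega(d)}$. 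Concretely, $(\log x)^{\omega(d)}\geq (2\omega(d))^{\omega(d)}\geq 2^{\omega(d)}$, or more usefully $x\geq e^{2\omega(d)}$ gives $x^{1/2}\geq e^{\omega(d)}\geq C_0^{\omega(d)}$ for any fixed $C_0\leq e$, so one can trade a power of $C^{\omega(d)}$ against a power of $x$; this is the one genuinely delicate point.

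Here is the cleaner route I would actually take. Write $L=\lfloor \log_2 x\rfloor$. The number of $w\mid d^\infty$ with $w\leq x$ is at most $\binom{L+\omega(d)}{\omega(d)}$ if we instead count tuples with $\sum a_l\log l\leq\log x$, i.e. $\sum a_l\leq\log x/\log 2$, but crudely it is at most $(L+1)^{\omega(d)}\leq(\log x/\log 2+1)^{\omega(d)}$. Using $\log x\geq 2\omega(d)$: we get $\log x/\log 2+1\leq \log x(1/\log 2+1/(2\omega(d)))\leq \log x(1/\log 2 + 1/2)\leq 2\log x$, so the count is at most $(2\log x)^{\omega(d)}=2^{\omega(d)}(\log x)^{\omega(d)}$. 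Finally $2^{\omega(d)}\leq e^{\omega(d)}\leq x^{1/2}\leq(\log x)^{\omega(d)}\cdot\frac{x^{1/2}}{(\log x)^{\omega(d)}}$ — this does not immediately give an absolute constant either. The honest resolution: take $c_2$ to \emph{depend on nothing} by noting $2^{\omega(d)}(\log x)^{\omega(d)}\leq (\log x)^{\omega(d)}\cdot 2^{\omega(d)}$ and absorbing $2^{\omega(d)}$ into a slightly larger exponent is not allowed; so in fact one should read the claim as: for $x\geq e^{2\omega(d)}$, $\#\{w\mid d^\infty:w\leq x\}\leq (2\log x)^{\omega(d)}$, and then set $c_2=2^{\omega(d)}$ — but the statement insists $c_2$ is absolute. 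I would therefore bound $2^{\omega(d)}(\log x)^{\omega(d)}\leq (\log x)^{2\omega(d)}/\text{something}$ using $2^{\omega(d)}\leq e^{\omega(d)}\leq \sqrt{x}$ and $\sqrt x\leq (\log x)^{\omega(d)}$ when... this needs $x$ large relative to $\omega(d)$, which is exactly what $x\geq e^{2\omega(d)}$ almost gives. Given the paper's phrasing, I suspect the intended reading is that $c_2$ absorbs the $2^{\omega(d)}$ harmlessly because in all later uses $\omega(d)$ is fixed; I would simply prove $\#\{w\leq x\}\leq 2^{\omega(d)}(\log x)^{\omega(d)}$ and declare $c_2=2^{\omega(d)}$, or, to be safe, carry $c_2$ as depending on $d$ — but I will follow the excerpt's convention.

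For the second inequality I would perform Abel summation / dyadic decomposition against the first. Write $N(t)=\#\{w\mid d^\infty:w\leq t\}$. Then
\[
\sum_{\substack{w\mid d^\infty\\ w>x}}\frac1w=\int_x^\infty\frac{dN(t)}{t}=\Big[\frac{N(t)}{t}\Big]_x^\infty+\int_x^\infty\frac{N(t)}{t^2}\,dt.
\]
The boundary term at infinity vanishes since $N(t)\ll(\log t)^{\omega(d)}=o(t)$, and at $t=x$ it contributes $-N(x)/x$, which is already $\leq c_2(\log x)^{\omega(d)}/x$ by the first part (note the subtracted term only helps). For the integral, $\int_x^\infty N(t)t^{-2}\,dt\leq c_2\int_x^\infty(\log t)^{\omega(d)}t^{-2}\,dt$; substituting $t=e^u$ gives $c_2\int_{\log x}^\infty u^{\omega(d)}e^{-u}\,du$, an incomplete Gamma integral. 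The clean bound I want is $\int_v^\infty u^k e^{-u}\,du\leq 2 v^k e^{-v}$ valid once $v\geq 2k$ — which holds because for $v\geq 2k$ the integrand decays at least as fast as a geometric series with ratio $\leq 1/2$, more precisely $u^k e^{-u}/(v^k e^{-v})\leq e^{-(u-v)/2}$ for $u\geq v\geq 2k$ since $(u/v)^k\leq e^{k(u-v)/v}\leq e^{(u-v)/2}$. With $v=\log x\geq 2\omega(d)$ (exactly our hypothesis), this yields $\int_{\log x}^\infty u^{\omega(d)}e^{-u}\,du\leq 2(\log x)^{\omega(d)}e^{-\log x}=2(\log x)^{\omega(d)}/x$, hence $\sum_{w>x}1/w\leq 2c_2(\log x)^{\omega(d)}/x$ as claimed (the $-N(x)/x$ term is negative so it only improves matters).

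The main obstacle is the first step — extracting a constant that is genuinely absolute from the tuple count $(\lfloor\log_2 x\rfloor+1)^{\omega(d)}$, since the natural bound carries a spurious $2^{\omega(d)}$ or $(\log 2)^{-\omega(d)}$ factor. The hypothesis $x\geq e^{2\omega(d)}$ is precisely what makes $\log x/\log 2+1\leq 2\log x$ work, and it is also exactly the threshold $v\geq 2k$ needed for the incomplete-Gamma tail bound in the second step; recognizing that both inequalities are governed by the same condition $\log x\geq 2\omega(d)$ is the key structural observation. Everything else is routine: the divisor count is elementary combinatorics, and the passage from the first bound to the second is standard partial summation plus a Gamma-function tail estimate.
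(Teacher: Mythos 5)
Your treatment of the second inequality is correct and structurally the same as the paper's: partial summation against the counting function $N(t)$, discard the negative boundary term, and bound $\int_x^\infty (\log t)^{\omega(d)}t^{-2}\,dt$ by $2(\log x)^{\omega(d)}/x$ using $\log x\ge 2\omega(d)$. Your incomplete-Gamma tail estimate $\int_v^\infty u^k e^{-u}\,du\le 2v^ke^{-v}$ for $v\ge 2k$ is a valid and in fact more explicit justification than the paper's (which merely observes that $I(x)-2(\log x)^{\omega(d)}/x$ increases to $0$ on that range). The problem is entirely in the first inequality, where there is a genuine gap that you flag repeatedly but never close: you bound the number of admissible exponents of \emph{every} prime $l\mid d$ by $\log_2 x+1$, which yields $(\log_2 x+1)^{\omega(d)}$ and hence an irreducible factor of order $(1/\log 2)^{\omega(d)}$ (or $2^{\omega(d)}$ after your manipulations). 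No use of $x\ge e^{2\omega(d)}$ can convert $2^{\omega(d)}(\log x)^{\omega(d)}$ into $c_2(\log x)^{\omega(d)}$ with $c_2$ absolute --- that would force $c_2\ge 2^{\omega(d)}$ --- and your fallback ``declare $c_2=2^{\omega(d)}$'' contradicts the statement as written. You correctly sensed that the hypothesis $x\ge e^{2\omega(d)}$ is not the mechanism that rescues the constant; it is there for the tail integral, not for this.

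The missing idea is to keep the base of the logarithm attached to each prime rather than uniformizing to base $2$: the count is at most $\prod_{l\mid d}\bigl(\log_l x+1\bigr)$, and for every prime $l\ge e^2$ one has $\log_l x+1=\log x/\log l+1\le \tfrac{1}{2}\log x+1\le\log x$ as soon as $\log x\ge 2$. Only the primes $l<e^2$, namely $l\in\{2,3,5,7\}$, require a constant $C_l$ with $\log_l x+1\le C_l\log x$, and since there are absolutely boundedly many such primes, $c_2=\prod_{l<e^2}C_l$ is an absolute constant. This is exactly the paper's argument; your uniform base-$2$ bound throws away the saving of $\log l$ for the large prime divisors of $d$, which is precisely where the spurious $2^{\omega(d)}$ comes from. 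With the first bound corrected in this way, your second argument goes through verbatim.
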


\begin{proof}
    Let $M_d(x)$ denote the sum on the left. We see that $M_d(x)$ is bounded above by the product of $\log_l(x)+1$ for all $l\mid d$. We have $\log_l(x)+1\leq \log(x)$ for all primes $e^2\leq l \leq x$. If $l\leq e^2$, then there exists a constant $C_l>0$ such that $\log_l(x)+1\leq C_l\log(x)$. The result follows by choosing $c_2$ as the product of the $C_l$'s for all primes $l\leq e^2$. Next, we apply Abel summation formula to the series on the right so that $M_d(x)$ appears in the expression. We find
     \begin{align*}
        \sum_{ \subalign{ w&\mid d^\infty \\w&>x } } \frac{1}{w} &= -\frac{M_d(x)}{x} + \int_x^{+\infty} \frac{M_d(t)}{t^2} \mathrm{d}t \leq c_2 \int_x^{+\infty} \frac{\log{(t)}^{\omega(d)}}{t^2} \mathrm{d}t.
    \end{align*}
    Call $I(x)$ the integral on the right-hand side of the inequality. For $x\geq e^{2\omega(d)}$, we see that $I(x)-2\log(x)^{\omega(d)}/x$ is an increasing function that converges to $0$ as $x$ tends to infinity. Hence $I(x) \leq 2\log(x)^{\omega(d)}/x$ and the result follows.
\end{proof}

\begin{lemma}\label{LemmaSN}
    For every $N\geq e^{2\omega(d)}$, we have
    \[
    \bigg| \frac{1}{N} \sum_{n=1}^N \delta_q(a,d,n) - \delta_q(a,d) \bigg| \leq c_2 \varphi(d)\bigg(1+\frac{2}{fd}\bigg) \frac{\log{(N)}^{\omega(d)}}{N},
    \]
    where $c_2$ is the absolute constant defined in Lemma \ref{BoundsSums}.
\end{lemma}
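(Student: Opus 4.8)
The plan is to exploit the partition \eqref{EqPartition} of $\mathbb{N}$ together with the fact that $\delta_q(a,d,n)$ is \emph{constant} on each piece $A_{w,\alpha}$ (equal to $\delta_w$) and vanishes on each $S_j$. So I would first split the sum $\frac{1}{N}\sum_{n=1}^N \delta_q(a,d,n)$ according to which progression each $n$ lies in. The contribution of the $S_j$ is zero. For each supernatural divisor $w\mid d^\infty$ and each admissible residue $\alpha$, the number of $n\le N$ in $A_{w,\alpha}=\{fw(\alpha+dm):m\ge 0\}$ is $\frac{N}{fwd}+O(1)$, so $\frac{1}{N}\sum_{n\le N} \delta_q(a,d,n) = \sum_{w\mid d^\infty}\sum_{(\alpha,d)=1}\delta_w\big(\frac{1}{fwd}+O(1/N)\big)$ over those $w$ with $fw\le N$. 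Summing the main term over the $\varphi(d)$ choices of $\alpha$ gives $\frac{\varphi(d)}{fd}\sum_{w\mid d^\infty,\,fw\le N}\frac{\delta_w}{w}$, which is exactly $\delta_q(a,d)$ minus a tail $\frac{\varphi(d)}{fd}\sum_{w\mid d^\infty,\,fw>N}\frac{\delta_w}{w}$.

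The two error sources are then: (i) the tail of the series for $\delta_q(a,d)$, and (ii) the accumulated $O(1)$ errors from counting lattice points in each progression. For (i), I need a bound $|\delta_w|\le C$ uniform in $w$: from \eqref{EqFormulaDelta_w} one has $|\delta_w|\le \sum_{v\mid e_{fw}}\sum_{u\mid d}\frac{(uv,h)}{uv}\le \sum_{v\mid e_{fw}}\frac{1}{v}\sum_{u\mid d}\mu^2(u)\le 2^{\omega(d)}\cdot\big(\text{something}\big)$; in any case $|\delta_w|\le 2^{\omega(d)}$ or a similar absolute-in-$w$ bound, since $\delta_w$ is a proportion-density and each Kummer-degree term lies in $[0,1]$ and there are at most $\tau(e_{fw})\,\tau(d)$ of them — but $\tau(e_{fw})$ is not bounded in $w$, so one must instead use that the inner $u$-sum telescopes to a product $\prod_{l\mid d}(1-\cdots)$ of factors in $[-1,1]$, giving $|\delta_w|\le \sum_{v\mid e_{fw}}\frac{(v,h)}{v}\cdot 2^{\omega(d)}\le 2^{\omega(d)}\sigma_{-1}(e_{fw})$ — still growing. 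The cleanest route is: $\delta_w=\delta_q(a,d,N)$ for $N\in A_{w,\alpha}$ is the proportion-density of a subset of all primes of degree $N$, hence $0\le\delta_w\le 1$ outright (it is literally a ratio of cardinalities in the limiting sense, bounded by Theorem~\ref{MainTHMProportion} since $R_q(a,d,N)\le I_N\sim q^N/N$). With $0\le\delta_w\le 1$, Lemma~\ref{BoundsSums} gives $\frac{\varphi(d)}{fd}\sum_{w>N/f}\frac{\delta_w}{w}\le \frac{\varphi(d)}{fd}\cdot\frac{2c_2\log(N/f)^{\omega(d)}}{N/f}\le \frac{2c_2\varphi(d)}{d}\cdot\frac{\log(N)^{\omega(d)}}{N}$, handling (i).

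For (ii), the accumulated integer-part errors: for each pair $(w,\alpha)$ with $fw\le N$ the count of $n\le N$ in $A_{w,\alpha}$ differs from $\frac{N}{fwd}$ by at most $1$, and $\delta_w\le 1$, so the total such error is at most $\sum_{w\mid d^\infty,\,fw\le N}\varphi(d)\cdot 1 = \varphi(d)\cdot\#\{w\mid d^\infty: w\le N/f\}\le \varphi(d)\,c_2\log(N/f)^{\omega(d)}\le \varphi(d)c_2\log(N)^{\omega(d)}$, using Lemma~\ref{BoundsSums} again (valid once $N/f\ge e^{2\omega(d)}$, which one arranges by the hypothesis $N\ge e^{2\omega(d)}$ and absorbing the boundary case into the constant, or by noting the bound is trivial when $N<f e^{2\omega(d)}$ since both sides are then $O(1)$ relative to the claimed RHS — a point to check carefully). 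Dividing by $N$ gives the $c_2\varphi(d)\frac{\log(N)^{\omega(d)}}{N}$ term. Combining (i) and (ii): $\big|\frac1N\sum_{n=1}^N\delta_q(a,d,n)-\delta_q(a,d)\big|\le c_2\varphi(d)\frac{\log(N)^{\omega(d)}}{N}+\frac{2c_2\varphi(d)}{d}\cdot\frac{1}{f}\cdot\frac{\log(N)^{\omega(d)}}{N} = c_2\varphi(d)\big(1+\frac{2}{fd}\big)\frac{\log(N)^{\omega(d)}}{N}$, as claimed.

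The main obstacle I anticipate is bookkeeping the uniform bound on $\delta_w$ and making sure the range conditions ($fw\le N$ versus $w\le N/f$, and the threshold $e^{2\omega(d)}$ in Lemma~\ref{BoundsSums}) line up so that both applications of Lemma~\ref{BoundsSums} are legal for all $N\ge e^{2\omega(d)}$; everything else is the routine lattice-point count. One should double-check that the constant comes out exactly as $c_2\varphi(d)(1+\tfrac{2}{fd})$ and not merely $O(\cdots)$ — this forces the argument to use $\log(N/f)^{\omega(d)}\le\log(N)^{\omega(d)}$ and to be slightly careful that the tail bound's factor $\tfrac{1}{N/f}=\tfrac{f}{N}$ combines with $\tfrac{\varphi(d)}{fd}$ to give precisely $\tfrac{2\varphi(d)}{dN}\le\tfrac{2}{fd}\varphi(d)\tfrac1N$ only after noting $f\ge 1$; if instead the intended reading keeps a $\tfrac1f$, one adjusts accordingly.
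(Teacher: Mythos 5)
Your proposal follows essentially the same route as the paper: partition $\mathbb{N}$ via \eqref{EqPartition}, use that $\delta_q(a,d,n)=\delta_w$ on $A_{w,\alpha}$ and vanishes on the $S_j$, count lattice points with an $O(1)$ error per progression, bound $\delta_w\leq 1$ via the proportion-density interpretation, and invoke Lemma \ref{BoundsSums} for both the head sum and the tail. The one point you flagged yourself is a real (though minor) slip: splitting at $w\leq N/f$ makes the tail bound $\frac{\varphi(d)}{fd}\cdot\frac{2c_2 f\log(N/f)^{\omega(d)}}{N}=\frac{2c_2\varphi(d)}{d}\cdot\frac{\log(N/f)^{\omega(d)}}{N}$, and your final inequality $\frac{2\varphi(d)}{dN}\leq \frac{2\varphi(d)}{fdN}$ is backwards for $f>1$, so as written you only get the constant $1+\frac{2}{d}$ rather than $1+\frac{2}{fd}$. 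The paper avoids this entirely by cutting the sum at $w\leq N$ versus $w>N$ (any contributing $w$ satisfies $w\leq N/f\leq N$, so this is legitimate), which makes both applications of Lemma \ref{BoundsSums} occur at argument $N$ and yields the stated constant exactly; with that one adjustment your argument matches the paper's.
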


\begin{proof}
    From the partition of $\mathbb{N}$ given in \eqref{EqPartition}, we have
    \[
    S_N := \frac{1}{N} \sum_{n=1}^N \delta_q(a,d,n) = \frac{1}{N} \sum_{w\mid d^\infty} \sum_{\substack{\alpha=1 \\ (\alpha,d)=1}}^d \sum_{n\in A_{w,\alpha}(N)} \delta_w,
    \]
    where $A_{w,\alpha}(N) = A_{w,\alpha} \cap \llbracket 1,N \rrbracket $, and using that $\delta_q(a,d,n) = 0$ if $n\in S_j$, and that $\delta_q(a,d,n)=\delta_w$ if $n\in A_{w,\alpha}$. Note that $w\leq N$ and
    \[
    \# A_{w,\alpha}(N) = \bfloor{\frac{N+fw(d-\alpha)}{fdw}},
    \]
    thus, we obtain
    \[
    S_N = \frac{1}{N} \sum_{\substack{w\mid d^\infty \\ w\leq N}} \sum_{\substack{\alpha=1 \\ (\alpha,d)=1}}^d \bfloor{\frac{N+fw(d-\alpha)}{fdw}} \delta_w.
    \]
    By definition of the floor function, on the one hand, we have
    \[
    S_N\geq \frac{\varphi(d)}{N} \sum_{\substack{w\mid d^\infty \\ w\leq N}} \delta_w\left( \frac{N}{fdw} - 1 \right)= \delta_q(a,d) -  \frac{\varphi(d)}{N} \sum_{\substack{w\mid d^\infty \\ w\leq N}} \delta_w  - \frac{\varphi(d)}{fd}\sum_{\substack{w\mid d^\infty \\ w>N}} \frac{\delta_w}{w}.
    \]
    and on the other hand,
    \[
    S_N \leq \delta_q(a,d) + \frac{\varphi(d)}{N} \sum_{\substack{w\mid d^\infty \\ w\leq N}} \delta_w  + \frac{\varphi(d)}{fd}\sum_{\substack{w\mid d^\infty \\ w>N}} \frac{\delta_w}{w}.
    \]
    We obtain
    \[
    |S_N-\delta_q(a,d) | \leq \frac{\varphi(d)}{N} \sum_{\substack{w\mid d^\infty \\ w\leq N}} \delta_w  + \frac{\varphi(d)}{fd}\sum_{\substack{w\mid d^\infty \\ w>N}} \frac{\delta_w}{w} \leq c_2\varphi(d)\left(1+ \frac{2}{fd} \right)\frac{\log(N)^{\omega(d)}}{N},
    \]
    where we used that $\delta_w\leq 1$, and Lemma \ref{BoundsSums}.
\end{proof}

\begin{theorem}\label{MAINTHM}
    There exists an absolute constant $c_3>0$ such that
    \[
    \bigg| \frac{1}{N} \sum_{n=1}^N \frac{R_q(a,d,n)}{q^n/n} - \delta_q(a,d) \bigg| \leq c_2\varphi(d)\bigg(1+\frac{2}{fd}\bigg) \frac{\log{(N)}^{\omega(d)}}{N} + \frac{c_3}{N},
    \]
    for all $N\geq e^{2\omega(d)}$, where $c_2$ is the absolute constant defined in Lemma \ref{BoundsSums}. In particular, $R_q(a,d)$ has $d_3$-density equal to $\delta_q(a,d)$.
\end{theorem}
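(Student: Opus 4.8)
The plan is to combine the pointwise estimate of Theorem~\ref{MainTHMProportion} with the averaged estimate of Lemma~\ref{LemmaSN} via the triangle inequality. Write
\[
\frac{1}{N}\sum_{n=1}^N \frac{R_q(a,d,n)}{q^n/n} - \delta_q(a,d)
= \underbrace{\frac{1}{N}\sum_{n=1}^N\left(\frac{R_q(a,d,n)}{q^n/n} - \delta_q(a,d,n)\right)}_{=:E_1(N)}
+ \underbrace{\left(\frac{1}{N}\sum_{n=1}^N \delta_q(a,d,n) - \delta_q(a,d)\right)}_{=:E_2(N)}.
\]
Lemma~\ref{LemmaSN} bounds $|E_2(N)|$ by $c_2\varphi(d)(1+2/(fd))\log(N)^{\omega(d)}/N$ for $N\geq e^{2\omega(d)}$, which is exactly the first term on the right-hand side of the claimed inequality. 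So everything reduces to showing $|E_1(N)| \leq c_3/N$ for an absolute constant $c_3>0$.

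For $E_1(N)$, first recall that when $f\nmid n$ both $R_q(a,d,n)=0$ and $\delta_q(a,d,n)=0$, so only indices $n\equiv 0\pmod f$ contribute. For those, Theorem~\ref{MainTHMProportion} gives $|R_q(a,d,n)-\delta_q(a,d,n)q^n/n|\leq 2^{\omega(d)+1}c_1\tau(e_n)q^{n/2}/n$, hence, multiplying through by $n/q^n$,
\[
\left|\frac{R_q(a,d,n)}{q^n/n} - \delta_q(a,d,n)\right| \leq 2^{\omega(d)+1}c_1\,\tau(e_n)\,q^{-n/2}.
\]
Therefore $|E_1(N)| \leq \frac{1}{N}\sum_{n=1}^N 2^{\omega(d)+1}c_1\,\tau(e_n)\,q^{-n/2}$, and it suffices to show the full series $\sum_{n\geq 1} \tau(e_n) q^{-n/2}$ converges to some finite value depending only on $q$ and $d$; then $c_3 := 2^{\omega(d)+1}c_1\sum_{n\geq 1}\tau(e_n)q^{-n/2}$ works, since $\frac1N\sum_{n=1}^N(\cdots)\leq \frac1N\sum_{n\geq1}(\cdots)=c_3/N$. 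The convergence follows because $e_n=(\,(q^n-1)/d,\ d^\infty)$ divides $q^n-1$, so $e_n\leq q^n$ and thus $\tau(e_n)\leq \tau(q^n-1)$; a crude bound such as $\tau(e_n)\ll_\varepsilon e_n^{\varepsilon}\leq q^{\varepsilon n}$ with $\varepsilon<1/2$, or even just $\tau(e_n)\leq 2\sqrt{e_n}\leq 2q^{n/2}$ combined with a sharper divisor bound, makes $\sum \tau(e_n)q^{-n/2}$ geometrically convergent. (One can be even more careful using Lemma~\ref{lemma(dinf)} to see $e_n$ grows only polynomially in $n$ along each arithmetic progression, but the crude estimate already suffices.) Then absorb the constant: $c_3$ depends on $q$, $d$, $a$ through $c_1$, but since $a$, $d$, $q$ are fixed throughout, $c_3$ is a constant in the sense required.

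Finally, the ``in particular'' clause: since $\log(N)^{\omega(d)}/N\to 0$ and $c_3/N\to 0$ as $N\to\infty$, the left-hand side tends to $0$, which is precisely the statement that $\lim_{N\to\infty}\frac1N\sum_{n=1}^N R_q(a,d,n)/(q^n/n)$ exists and equals $\delta_q(a,d)$; by the definition of $d_3$ recalled in the introduction, $d_3(R_q(a,d))=\delta_q(a,d)$.

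The only real obstacle is making the summability of $\sum_n\tau(e_n)q^{-n/2}$ airtight with a constant that is genuinely uniform (``absolute'' in the paper's loose sense, i.e.\ not growing with $N$); this is routine given that $e_n\mid q^n-1$ and the standard bound $\tau(k)=k^{o(1)}$, but one should state explicitly which divisor bound is invoked so the geometric decay is visible.
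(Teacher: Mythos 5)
Your decomposition into the averaged error $|R_N-S_N|$ plus the Lemma \ref{LemmaSN} term, and your treatment of each piece, is exactly the paper's argument, and the proof is correct. The only (immaterial) difference is in bounding $\sum_{f\mid n}\tau(e_n)q^{-n/2}$: the paper uses Lemma \ref{lemma(dinf)} to get $\tau(e_n)\ll_{q,d,a} n$ and sums $\sum_{f\mid n} n q^{-n/2}$ explicitly, whereas you invoke the standard divisor bound $\tau(k)\ll_\varepsilon k^\varepsilon$ with $\varepsilon<1/2$ together with $e_n\mid q^n-1$ --- which indeed gives geometric convergence, unlike the $\tau(k)\le 2\sqrt{k}$ fallback you also float, which (as you correctly sense) is insufficient on its own.
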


\begin{proof}
    First, we put
    \[
    R_N = \frac{1}{N} \sum_{n=1}^N \frac{R_q(a,d,n)}{q^n/n} \quad \text{and} \quad S_N = \frac{1}{N} \sum_{n=1}^N \delta_q(a,d,n).
    \]
    We have
    \[
    |R_N - \delta_q(a,d) | \leq | R_N - S_N | + c_2\varphi(d)\bigg(1+\frac{2}{fd}\bigg) \frac{\log{(N)}^{\omega(d)}}{N},
    \]
    for all $N\geq e^{2\omega(d)}$, by Lemma \ref{LemmaSN}. Let us now bound the term $| R_N - S_N | $. Using Theorem \ref{MainTHMProportion}, and since $R_q(a,d,n)=\delta_q(a,d,n)=0$ if $f\nmid n$, we have
    \[
    | R_N - S_N | \leq \frac{1}{N} \sum_{\substack{n=1 \\ f\mid n}}^N \bigg|\frac{R_q(a,d,n)}{q^n/n} - \delta_q(a,d,n)  \bigg| \leq \frac{2^{\omega(d)+1}c_1}{N} \sum_{\substack{n=1 \\ f\mid n}}^N \tau(e_n)q^{-n/2}.
    \]
    By Lemma \ref{lemma(dinf)}, $e_n \leq 2^{v_2(q^f+1)} e_f n/f$. Hence $\tau(e_n) \leq v_2(q^f+1) \tau(e_f) n/f $, and
    \[
    | R_N - S_N | \leq \frac{2^{\omega(d)+1} v_2(q^f+1) c_1 \tau(e_f)}{Nf} \sum_{\substack{n=1 \\ f\mid n}}^N n q^{-n/2} =: \frac{c}{Nf}  \sum_{\substack{n=1 \\ f\mid n}}^N n q^{-n/2}.
    \]
    Since $q^{-1/2}<1$, we obtain
    \[
    | R_N - S_N | \leq \frac{c}{Nf} \sum_{\substack{n=1 \\ f\mid n}}^{+\infty} n q^{-n/2} =\frac{c}{Nf} \sum_{m=0}^{+\infty} fm q^{-fm/2} = \frac{c}{N}  \frac{q^{-f/2}}{(q^{-f/2}-1)^2} =: \frac{c_3}{N}.
    \]
    This completes the proof of the bound. Letting $N$ tend to infinity shows that the set $R_q(a,d)$ has $d_3$-density equal to $\delta_q(a,d)$.
\end{proof}

\begin{corollary}
    The set $R_q(a,d)$ has $d_4$ and Dirichlet density equal to $\delta_q(a,d)$.
\end{corollary}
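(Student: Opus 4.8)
The plan is to simply combine Theorem \ref{MAINTHM} with the implication chain recalled in the introduction. By Theorem \ref{MAINTHM}, the set $R_q(a,d)$ has a $d_3$-density, and it equals $\delta_q(a,d)$. According to \cite[Theorem A]{Ba3}, for sets of monic irreducible polynomials in $A$ one has the implications
\[
d_1 \iff d_2 \implies d_3 \implies d_4 \iff \delta,
\]
where $\delta_1 \implies \delta_2$ means that any such set having $\delta_1$-density equal to some value $d$ must have $\delta_2$-density equal to the same value $d$. Since $R_q(a,d)$ is by definition a subset of $\mathcal{P}_+$, the cited theorem applies verbatim.

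Concretely, I would invoke the implication $d_3 \implies d_4$ with the value $d=\delta_q(a,d)$ to conclude that $R_q(a,d)$ has $d_4$-density equal to $\delta_q(a,d)$, and then the equivalence $d_4 \iff \delta$ to conclude that it has Dirichlet density equal to $\delta_q(a,d)$ as well. No further computation is needed: the quantitative content has already been established in Theorem \ref{MAINTHM}, and the present statement is a purely formal consequence of the general comparison of densities on $A$.

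There is no real obstacle here; the only point to keep in mind is that the invoked result is a statement about arbitrary sets of primes in $A$, so one must merely observe that $R_q(a,d)$ falls within its scope, which is immediate. Thus the proof reduces to a one-line citation of \cite[Theorem A]{Ba3} applied to Theorem \ref{MAINTHM}.
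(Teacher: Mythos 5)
Your proposal is correct and is essentially identical to the paper's own proof: it cites Theorem \ref{MAINTHM} for the existence and value of the $d_3$-density and then applies the implication chain $d_3 \implies d_4 \iff \delta$ from \cite[Theorem A]{Ba3}. Nothing further is needed.
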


\begin{proof}
    Theorem \ref{MAINTHM} establishes the existence and the value of the $d_3$-density of $R_q(a,d)$. The result follows from \cite[Theorem A]{Ba3}.
\end{proof}


\section{Closed-form for the \texorpdfstring{$d_3$}{TEXT}-density}\label{SectionClosedD3}

We proved in Section \ref{SectionMainTheorems} that $\delta_q(a,d)$ is the $d_3$-density of $R_q(a,d)$. However, this constant is defined via a series, meaning that there are infinitely many operations to carry out in order to compute it. We show in this section, under the assumption $f_{u,v}=\mathrm{ord}_{dv}(q)$ for all $u\mid d$ and $v\mid d^\infty$, that $\delta_q(a,d)$ can be written in a closed-form formula, that is, a formula that requires only finitely many simple operations. We define a function $\eta : \mathbb{N}^2\longrightarrow \mathbb{N}$ by $\eta(m,n)=2^{v_2(q^{\Bar{f}}+1)-1}$ if $2\nmid (m,n)$ and $[\mathcal{P}]=1$, and $\eta(m,n)=1$ otherwise. Note that $m\mapsto \eta(m,n)$ is a multiplicative function for all $n\geq 1$.

\begin{proposition}\label{PropositionClosedFormDelta_w}
    Assume that $f_{u,v}=\mathrm{ord}_{dv}(q)$ for all $u\mid d$ and $v\mid d^\infty$. For each $w\mid d^\infty$, we have
    \[
    \delta_w =
    \begin{dcases}
        \sum_{u\mid d} \frac{\mu(u)(dh,u^\infty)}{u(q^{\Bar{f}}-1,u^\infty)(\nu,u^\infty)\eta(\nu,u)}, & \text{if } fw=\Bar{f}\nu, \, \nu \mid d^\infty; \\
        0, & \text{otherwise}.
    \end{dcases}
    \]
    If $fw=\Bar{f}\nu$, we may denote $\delta_w$ by $\delta(\nu)$ when it is written in the above form to make the dependence in $\nu$ more obvious.
\end{proposition}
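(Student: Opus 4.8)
The plan is to go back to the defining formula \eqref{EqFormulaDelta_w}, use the hypothesis to replace the Iverson symbols by a single divisibility condition on $v$, evaluate the resulting double sum by factoring everything over the primes dividing $d$, and finally rewrite the answer with Lemma \ref{lemma(dinf)}. First, under $f_{u,v}=\mathrm{ord}_{dv}(q)$ the symbol $[f_{u,v}\mid fw]$ is independent of $u$ and equals $[\mathrm{ord}_{dv}(q)\mid fw]=[dv\mid q^{fw}-1]=[v\mid e_{fw}]$, the last equality because $v\mid d^\infty$ and $(q^{fw}-1,d^\infty)=d\,e_{fw}$. Since the sum in \eqref{EqFormulaDelta_w} already runs over $v\mid e_{fw}$, every symbol is $1$; writing $h':=(h,d^\infty)$, which we may substitute for $h$ since $uv\mid d^\infty$, we obtain
\[
\delta_w=\sum_{u\mid d}\frac{\mu(u)}{u}\sum_{v\mid e_{fw}}\frac{(uv,h')}{v}.
\]

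Next, both the inner sum over $v\mid e_{fw}$ and the outer alternating sum over the squarefree $u\mid d$ factor over the primes $l\mid d$. Setting $a_l:=v_l(e_{fw})$ and $b_l:=v_l(h')$, one has $\sum_{v\mid e_{fw}}\frac{(uv,h')}{v}=\prod_{l\mid d}\sum_{j=0}^{a_l}l^{\min(v_l(u)+j,\,b_l)-j}$, and a short geometric-series computation then gives
\[
\delta_w=\prod_{l\mid d}\Big(1-l^{-(1+a_l-b_l)}\Big),
\]
with the convention that a factor with $a_l<b_l$ contributes $0$.

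This product vanishes unless $a_l\ge b_l$ for every $l\mid d$, i.e.\ unless $h'\mid e_{fw}$; as $e_{fw}=(q^{fw}-1,d^\infty)/d$, this is equivalent to $dh'\mid q^{fw}-1$, i.e.\ to $\bar f=\mathrm{ord}_{dh'}(q)$ dividing $fw$. When it fails, $\delta_w=0$, which is the second branch. When it holds, put $\nu:=fw/\bar f$; then $\nu\mid d^\infty$ because $\bar f/f\mid d^\infty$ (apply Lemma \ref{LemmaFormulaOrder} at $v=h'$) and $w\mid d^\infty$, so $fw=\bar f\nu$ with $\nu\mid d^\infty$ and we are in the first branch. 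To match the closed form there, note $u\mapsto (dh,u^\infty)/(q^{fw}-1,u^\infty)$ is completely multiplicative, so by the Euler product $\sum_{u\mid d}\frac{\mu(u)(dh,u^\infty)}{u\,(q^{fw}-1,u^\infty)}=\prod_{l\mid d}\big(1-\frac{(dh,l^\infty)}{l\,(q^{fw}-1,l^\infty)}\big)$; since $v_l(e_{fw})=v_l(q^{fw}-1)-v_l(d)$ and $v_l(dh)-v_l(h')=v_l(d)$ for $l\mid d$, each factor equals $1-l^{-(1+a_l-b_l)}$, so this product is exactly the one above. Finally, Lemma \ref{lemma(dinf)} with $m=\bar f$, $n=\nu$ yields $(q^{fw}-1,u^\infty)=(q^{\bar f\nu}-1,u^\infty)=(q^{\bar f}-1,u^\infty)(\nu,u^\infty)\,\eta(\nu,u)$, its $2$-adic correction term being precisely $\eta(\nu,u)$ — a power of $2$ supported at the prime $2$, trivial at all odd primes, and (being multiplicative in its first slot) equal to $1$ when $u$ is odd; substituting this gives the stated formula.

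The only genuinely delicate point is the prime $l=2$ under proposition $\mathcal{P}$: there one must verify that $v_2(e_{fw})$ lines up with $v_2(q^{\bar f}-1)$, $v_2(q^{\bar f}+1)$, $v_2(d)=1$, $v_2(h')\le 1$ and $v_2(\nu)$ exactly as dictated by Lemma \ref{lemma(dinf)} and the definition of $\eta$, and that the $\eta$-factor is attached to the correct residues. Everything else is the routine prime-by-prime bookkeeping indicated above.
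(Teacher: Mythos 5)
Your argument is correct and follows essentially the same route as the paper's proof: you use the hypothesis to reduce every Iverson symbol to $[v\mid e_{fw}]=1$, evaluate the resulting double sum prime-by-prime via multiplicativity (the paper organizes this as two successive Euler products, first over $u$ then over $v$, but the local computation and the resulting product $\prod_{l\mid d}\bigl(1-l^{-(1+a_l-b_l)}\bigr)$ are the same), identify the non-vanishing condition $\bar f\mid fw$, and finish by applying Lemma \ref{lemma(dinf)} to $(q^{\bar f\nu}-1,u^\infty)$. Your explicit verification that the two product expressions agree exponent-by-exponent, and your flagging of the $l=2$ case under $\mathcal{P}$, are if anything slightly more detailed than the paper's treatment.
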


\begin{proof}
    The proof being quite similar to the proof of \cite[Lemma 4]{Moree}, we may skip a few details. Since $v\mid e_{fw}$ if and only if $\mathrm{ord}_{dv}(q)\mid fw$, and because $f_{u,v}=\mathrm{ord}_{dv}(q)$, we find from \eqref{EqFormulaDelta_w} that
    \[
    \delta_w = \sum_{v\mid e_{fw}} \sum_{u\mid d} \frac{\mu(u)(uv,h)}{uv}= \sum_{v\mid e_{fw}} \frac{(v,h)}{v} \prod_{l\mid d} \left( 1-\frac{(lv,h)}{l(v,h)} \right),
    \]
    where we used the Euler product formula on the inner sum. Note that the product is non-zero if and only if $(h,d^\infty) \mid e_{fw}$, or equivalently, $\Bar{f}\mid fw$. Assume $fw=\Bar{f}\nu$ for some $\nu\mid d^\infty$. Then, the product is equal to $\varphi(d)/d$. We use the Euler product formula twice on the remaining sum to obtain
    \[
    \delta_w =\prod_{l\mid d} \bigg( 1 - \frac{l^{v_l(dh)}}{l^{v_l(q^{\Bar{f}\nu}-1)+1}}\bigg) =\sum_{u\mid d} \frac{\mu(u)(dh,u^\infty)}{u(q^{\Bar{f}\nu}-1,u^\infty)}.
    \]
    Finally, we apply Lemma \ref{lemma(dinf)} to $(q^{\Bar{f}\nu}-1,u^\infty)$ in the general term of the sum, which is allowed since $u\mid q^{\Bar{f}}-1$, to obtain $\delta_w = \delta(\nu)$.
\end{proof}

\begin{theorem}\label{THMClosedFormd3}
    Put $C=3\cdot 4^{-1}+ 2^{-v_2(q^{\Bar{f}}+1)-1}$. Assume that $f_{u,v}=\mathrm{ord}_{dv}(q)$ for all $u\mid d$ and $v\mid d^\infty$. Then, we have
    \[
    \delta_q(a,d) = \frac{1}{\Bar{f}} \prod_{l\mid d} \bigg(1-\frac{l^{v_l(dh)}C^{[l=2]\cdot [\mathcal{P}]}}{(l+1)l^{v_l(q^{\Bar{f}}-1)}} \bigg).
    \]
\end{theorem}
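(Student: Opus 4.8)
The plan is to substitute the closed form for $\delta_w = \delta(\nu)$ from Proposition \ref{PropositionClosedFormDelta_w} into the defining series \eqref{delta_q(a,d) first expression} and then factor the resulting double sum as an Euler product over the primes $l \mid d$. First I would note that the only $w\mid d^\infty$ contributing to $\delta_q(a,d)$ are those with $fw = \Bar{f}\nu$ for some $\nu\mid d^\infty$; since $\Bar{f}/f = (h,d^\infty)/(\mathrm{ind}_{\mathbb{F}_q(\zeta_{\Bar{f}})^\times}(\lambda),h,d^\infty)$ divides $(h,d^\infty)\mid d^\infty$, the map $\nu \mapsto w = \Bar{f}\nu/f$ is a bijection from $\{\nu\mid d^\infty\}$ onto the relevant $w$'s, and $1/w = f/(\Bar{f}\nu)$. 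Plugging this in gives
\[
\delta_q(a,d) = \frac{\varphi(d)}{df}\cdot \frac{f}{\Bar{f}} \sum_{\nu\mid d^\infty} \frac{1}{\nu} \sum_{u\mid d} \frac{\mu(u)(dh,u^\infty)}{u(q^{\Bar{f}}-1,u^\infty)(\nu,u^\infty)\eta(\nu,u)} = \frac{\varphi(d)}{d\Bar{f}}\sum_{u\mid d}\frac{\mu(u)(dh,u^\infty)}{u(q^{\Bar{f}}-1,u^\infty)}\sum_{\nu\mid d^\infty}\frac{1}{\nu(\nu,u^\infty)\eta(\nu,u)},
\]
after swapping the order of summation (justified by absolute convergence, since $\eta\geq 1$ and the $\nu$-sum is dominated by $\sum_{\nu\mid d^\infty}\nu^{-1}<\infty$).

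Next I would evaluate the inner $\nu$-sum. Since $\nu\mapsto (\nu,u^\infty)$ and $\nu\mapsto \eta(\nu,u)$ are both multiplicative (the latter by the remark preceding the proposition), the summand $\nu\mapsto \nu^{-1}(\nu,u^\infty)^{-1}\eta(\nu,u)^{-1}$ is multiplicative, so the sum factors as $\prod_{l\mid d}\sum_{j\geq 0} l^{-j}(l^j,u^\infty)^{-1}\eta(l^j,u)^{-1}$. For a prime $l\nmid u$ this local factor is $\sum_{j\geq 0}l^{-j}\eta(l^j,u)^{-1}$, which equals $l/(l-1)$ unless $l=2$ and $[\mathcal{P}]=1$, in which case $\eta(2^j,u)=2^{v_2(q^{\Bar{f}}+1)-1}$ for $j\geq 1$ (as $2\nmid u$ forces $2\nmid(2^j,u)$), giving $1 + 2^{-v_2(q^{\Bar{f}}+1)+1}\sum_{j\geq1}2^{-j} = 1 + 2^{-v_2(q^{\Bar{f}}+1)+1} = 2C$ — wait, more carefully $1 + 2^{1-v_2(q^{\Bar{f}}+1)} = 2(1/2 + 2^{-v_2(q^{\Bar{f}}+1)})$, and one checks this matches $\frac{l}{l-1}C^{[\mathcal{P}]} = 2C^{[\mathcal{P}]}$ with $C = 3/4 + 2^{-v_2(q^{\Bar{f}}+1)-1}$ after accounting for the $l=2$ factor $l/(l-1)=2$; the bookkeeping here (where exactly the $3/4$ versus $1/2$ discrepancy is absorbed) is the fiddly point. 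For $l\mid u$ the local factor is a finite-plus-tail computation: writing $v_l(u)=a\geq 1$, one has $(l^j,u^\infty)=l^{\min(j,\infty)}=l^j$ for all $j$ (since $l\mid u$ means $l^\infty\mid u^\infty$), so $(l^j,u^\infty)=l^j$ and the factor is $\sum_{j\geq 0}l^{-2j}\eta(l^j,u)^{-1}$; but since $l\mid u$ and $u\mid d$ with $\mu(u)\neq 0$ we have $\eta(l^j,u)=1$ automatically because $2\mid(l^j,u)$ when $l=2\mid u$, so this is just $\sum_{j\geq 0}l^{-2j} = l^2/(l^2-1) = \frac{l^2}{(l-1)(l+1)}$.

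Then I would recombine: for each squarefree $u\mid d$ the $\nu$-sum produces $\prod_{l\mid u}\frac{l^2}{(l-1)(l+1)}\cdot\prod_{l\mid d,\,l\nmid u}\frac{l}{l-1}C^{[l=2][\mathcal{P}]}$, and multiplying by $\frac{\varphi(d)}{d} = \prod_{l\mid d}\frac{l-1}{l}$ and by $\frac{\mu(u)(dh,u^\infty)}{u(q^{\Bar{f}}-1,u^\infty)}$, the whole expression $\sum_{u\mid d}(\cdots)$ becomes a product over $l\mid d$ of $\left(1 - \frac{l^{v_l(dh)}C^{[l=2][\mathcal{P}]}}{(l+1)l^{v_l(q^{\Bar{f}}-1)}}\right)$: each local factor of the $l$-th term contributes $\frac{l-1}{l}\cdot\frac{l}{l-1}C^{[l=2][\mathcal{P}]} = C^{[l=2][\mathcal{P}]}$ from the "$u$ not divisible by $l$" choice and $\frac{l-1}{l}\cdot\frac{l^2}{(l-1)(l+1)}\cdot\frac{-(dh,l^\infty)}{l(q^{\Bar{f}}-1,l^\infty)} = \frac{-l^{v_l(dh)}}{(l+1)l^{v_l(q^{\Bar{f}}-1)}}$ from the "$l\mid u$" choice (using $\mu$ and complete multiplicativity of $k\mapsto(k,l^\infty)$). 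Absorbing the leftover $\frac{1}{\Bar{f}}$ gives exactly the claimed formula. The main obstacle I anticipate is not the Euler-product mechanics but pinning down the constant $C$: tracking precisely which powers of $2$ come from $\varphi(d)/d$, from the $\nu$-local factor at $l=2$, and from $(q^{\Bar{f}}-1,2^\infty)$, so that the factor $3/4 + 2^{-v_2(q^{\Bar{f}}+1)-1}$ emerges with the correct normalization; I would handle this by treating the prime $l=2$ under hypothesis $\mathcal{P}$ as a fully separate case and verifying the identity $\frac12 + 2^{-v_2(q^{\Bar{f}}+1)} = \frac{l-1}{l}\cdot\bigl(1+2^{1-v_2(q^{\Bar{f}}+1)}\bigr)\cdot\frac{l}{l-1} \cdot \tfrac12$ numerically small-case first.
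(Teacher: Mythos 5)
Your overall strategy is exactly the paper's: substitute the closed form of $\delta(\nu)$ from Proposition \ref{PropositionClosedFormDelta_w} into \eqref{delta_q(a,d) first expression}, swap the $u$- and $\nu$-sums, evaluate the inner series $S(u)=\sum_{\nu\mid d^\infty}\nu^{-1}(\nu,u^\infty)^{-1}\eta(\nu,u)^{-1}$ as an Euler product over $l\mid d$, and then factor the outer $u$-sum. The reindexing $fw=\Bar{f}\nu$, the convergence justification, the identity $(l^j,u^\infty)=l^j$ for $l\mid u$, and the final Euler factorization are all fine. The argument breaks at the one genuinely delicate point: the local factor at $l=2$ when $[\mathcal{P}]=1$. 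You take the condition ``$2\nmid(m,n)$'' in the definition of $\eta$ at face value and conclude that $\eta(2^j,u)=2^{v_2(q^{\Bar{f}}+1)-1}$ when $2\nmid u$ while $\eta(2^j,u)=1$ when $2\mid u$. Tracing $\eta$ back to its source shows this is backwards: $\eta(\nu,u)$ records the correction factor $2^{v_2(q^{\Bar{f}}+1)-1}$ that Lemma \ref{lemma(dinf)} inserts into $(q^{\Bar{f}\nu}-1,u^\infty)$, and that factor is present precisely when $2\mid u$ \emph{and} $2\mid\nu$ (the lemma's hypothesis ``$[\mathcal{P}]=1$ and $2\mid n$,'' applied with $d$ replaced by $u$). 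So the intended condition is $2\mid(m,n)$; note also that under your reading $\eta(1,n)\neq 1$, contradicting the stated multiplicativity of $m\mapsto\eta(m,n)$.

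The consequence is not cosmetic. With the correct $\eta$, the only modified local factor is the one at $l=2\mid u$, which becomes $1+\frac{1}{3\cdot 2^{v_2(q^{\Bar{f}}+1)-1}}=\frac{4}{3}\,C$, so that $S(u)=\frac{du}{\varphi(d)\psi(u)}\,C^{[2\mid u]\cdot[\mathcal{P}]}$ and $C$ lands in the numerator of the $l=2$ Euler factor, exactly as in the statement. With your placement the correction sits on the $2\nmid u$ terms, produces $\tfrac12+2^{-v_2(q^{\Bar{f}}+1)}$ rather than $C$, and your own recombination then yields an Euler factor of the shape $C'^{[l=2]\cdot[\mathcal{P}]}-\frac{l^{v_l(dh)}}{(l+1)l^{v_l(q^{\Bar{f}}-1)}}$ instead of $1-\frac{l^{v_l(dh)}C^{[l=2]\cdot[\mathcal{P}]}}{(l+1)l^{v_l(q^{\Bar{f}}-1)}}$; these do not agree. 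You flagged the mismatch yourself (the ``$3/4$ versus $1/2$ discrepancy'') but deferred it to a numerical check, so the argument as written does not reach the stated formula. Correcting the condition in $\eta$ and redoing only the $l=2$ local factors closes the gap and recovers the paper's proof.
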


\begin{proof}
    By Proposition \ref{PropositionClosedFormDelta_w}, we may only consider the indices $w\mid d^\infty$ that satisfy $fw=\Bar{f}\nu$ for some $\nu\mid d^\infty$ in the expression \eqref{delta_q(a,d) first expression} of $\delta_q(a,d)$. We obtain
    \[
    \delta_q(a,d) = \frac{\varphi(d)}{d} \sum_{w\mid d^\infty} \frac{\delta_w}{fw} = \frac{\varphi(d)}{d} \sum_{\nu \mid d^\infty} \frac{\delta(\nu)}{\Bar{f}\nu}.
    \]
    Since the series is absolutely convergent, we may interchange the sum in $\delta(\nu)$ and the series. We find
    \[
    \delta_q(a,d) = \frac{\varphi(d)}{d} \sum_{u\mid d} \frac{\mu(u)(dh,u^\infty)}{u(q^{\Bar{f}}-1,u^\infty)} \sum_{\nu \mid d^\infty} \frac{1}{\nu(\nu,u^\infty)\eta(\nu,u)}.
    \]
    Let $S(u)$ denote the inner series. The function $\nu \mapsto \nu(\nu,u^\infty)\eta(\nu,u)$ is multiplicative. Moreover, we see that $\eta(\nu,u) = \eta(\mathrm{rad}(\nu),u)$, where $\mathrm{rad}(\nu)$ is the radical of $\nu$. By the Euler product formula, we obtain
    \begin{align*}
        S(u)= \prod_{l \mid d} \bigg( 1 + \sum_{r=1}^{+\infty} \frac{1}{l^{r(1+[l\mid u])}\eta(l,u)} \bigg) =  \prod_{l \mid d} \bigg( 1+ \frac{1}{\eta(l,u)(l^{1+[l\mid u]}-1)} \bigg).
    \end{align*}
    If $u$ is odd or $[\mathcal{P}]=0$, then 
    \[
    S(u) =  \prod_{\substack{ l \mid d \\ l\nmid u}} \bigg( 1+\frac{1}{l-1} \bigg) \prod_{l\mid u } \left(1+\frac{1}{l^2-1}\right) = \frac{du}{\varphi(d)\psi(u)}.
    \]
    When $2\mid u$, since $\eta(l,u) = 2^{v_2(q^{\Bar{f}}+1)-1}$ if and only if $l=2$, we have
    \begin{align*}
        S(u) = \prod_{\substack{ l \mid d \\ l\nmid u}} \bigg(\frac{l}{l-1}\bigg) \prod_{l\mid u } \bigg(\frac{l^2}{l^2-1}\bigg) \cdot \frac{3}{4} \bigg(1+ \frac{1}{3\cdot 2^{v_2(q^{\Bar{f}}-1)-1}}\bigg),
    \end{align*}
    which yields $S(u) =duC/\varphi(d)\psi(u)$. The expression of $\delta_q(a,d)$ becomes
    \[
    \delta_q(a,d) = \frac{1}{\Bar{f}} \sum_{u\mid d} \frac{\mu(u)(dh,u^\infty)C^{[2\mid u]\cdot [\mathcal{P}]}}{(q^{\Bar{f}}-1,u^\infty)\psi(u)}.
    \]
    The general term of the sum defines a multiplicative function in the variable $u$, and the result follows from the Euler product formula.
\end{proof}

Note that Theorem \ref{THMClosedFormd3} coincides with \cite[Theorem 3.3]{Ba1} and \cite[Theorem 11]{Ba2} in the case $a=T$ and, respectively, $d=2$ and $d$ an odd prime.

In conclusion, we proved that the set $R_q(a,d)$ has $d_3$-density $\delta_q(a,d)$ in Section \ref{SectionMainTheorems} and that $\delta_q(a,d)$ can be written in a closed form in Section \ref{SectionClosedD3}. However, the latter was done under the assumption that $[\mathbb{F}_{dv,uv}:\mathbb{F}_q]$ is equal to $\mathrm{ord}_{dv}(q)$ for all $v\mid d^\infty$ and $u\mid d$. It is quite easy to prove that this condition holds when $(d,h)=1$ or $(d,m)=1$, where $m=\mathrm{ord}_{\mathbb{F}_q^\times}(\lambda)$. However, this is not enough to cover all cases. For instance, it does not give any information about the density when $(d,h,m)>1$.


\begin{thebibliography}{19}


\bibitem{Ba0} C. Ballot, Density of prime divisors of linear recurrences, Mem. Amer. Math. Soc., {\bf 115} (1995).

\bibitem{Ba1} C. Ballot, Counting monic irreducible polynomials $P$ in $\mathbb{F}_q[X]$ for which order of $X \pmod{P}$ is odd, J. Theor. Nombres Bordeaux, {\bf 19} (2007), no. 1, 41--58.

\bibitem{Ba2} C. Ballot, An elementary method to compute prime densities in $\mathbb{F}_q[X]$, Combinatorial number theory, 71--80, de Greyter, Berlin, 2007.

\bibitem{Ba3} C. Ballot, Competing prime asymptotic densities in $\mathbb{F}_q[X]$: A discussion, Enseign. Math. (2), {\bf 54} (2008).

\bibitem{Bil} H. Bilharz, Primdivisoren mit vorgegebener Primitivwurzel, Math. Ann., {\bf 114} (1937), 476--492.


\bibitem{FriJar} M. D. Fried and M. Jarden, Field Arithmetic, Ergeb. Math. Grenzgeb. (3), {\bf 11}, Springer-Verlag, HeidelBerg, 2008.


\bibitem{Ha1} H. H. Hasse, Über die Dichte der Primzhalen, für die eine vorgegebene ganzrationale Zahl $a\ne 0$ von gerader bzw. ungerader Ordhung mod $p$ ist., Math. Annalen., {\bf 166} (1966), 19--23.

\bibitem{Ha2} H. H. Hasse, Über die Dichte der Primzahlen p, für die eine vorgegebene ganzrationale Zahl $a\ne 0$ von durch eine vorgegebene Primzahl $l\ne 2$ teilbarer bzw. unteilbarer Ordnung mod.p ist., Math. Annalen., {\bf 162} (1965), 74--76.


\bibitem{HoWa} L. Hochfilzer and E. Waxman, On Artin's primitive root conjecture for function fields over $\mathbb{F}_q$, Q. J. Math. {\bf 75} (2024), no. 3, 1181--1200.

\bibitem{KimMur} S. Kim and M. R. Murty, Artin's primitive root conjecture for function fields revisited, Finite Fields Appl., {\bf 67} (2020), 101713.

\bibitem{Lang} S. Lang, Algebra, Graduate Texts in Mathematics, {\bf 211}, Springer-Verlag, 2002.


\bibitem{Moree} P. Moree, On primes $p$ for which $d$ divides $\mathrm{ord}_p(g)$, Funct. Approx. Comment. Math. {\bf 33} (2005), 85--95.




\bibitem{Pap} F. Pappalardi, Square-free values of the order function, New York J. Math., {\bf 9} (2003), 331--344.


\bibitem{PapShp} F. Pappalardi and I. Shparlinski, On Artin’s conjecture over function fields, Finite Fields
Appl. 1 (1995), 399–404.


\bibitem{Shp} I. E. Shparlinski, Some arithmetic properties of recurrence sequences, Math. Notes, {\bf 47} (1990), 612--617; translated from Mat. Zametki, {\bf 47} (1990), 124--131.

\bibitem{Ro} M. Rosen, Number Theory in Function Fields, Graduate Texts in Mathematics, vol. 210, Springer-Verlag, New York, (2002).



\bibitem{Sti} H. Stichtenoth, Algebraic Function Fields and Codes, Graduate Texts in Mathematics, vol. 254, Springer-Verlag, Berlin, 2009.

\bibitem{Wei} A. Weil, Sur les courbes alg\'ebriques et les vari\'et\'es qui s'en d\'eduisent, Actualit\'es Sci. Ind., no. 1041, Publ. Inst. Math. Univ. Strasbourg, {\bf 7} (1945), Hermann et Cie., Paris, 1948.

\bibitem{Wie} K. Wiertelak, On the density of some sets of primes. IV, Acta Arith., {\bf 43} (1984), 177--190.



\end{thebibliography}
\end{document}